\documentclass{scrartcl}
\usepackage[latin9]{inputenc}
\usepackage{a4}
\usepackage{amsfonts}
\usepackage{amssymb}
\usepackage{amsmath}
\usepackage{amsthm}
\usepackage{makeidx}
\usepackage{pdflscape}
\usepackage{setspace}
\usepackage{multirow}
\usepackage{tabularx}
\usepackage{float}
\parindent0em
\parskip0.5em
\makeindex
\usepackage{nomencl}
\usepackage{geometry}
\usepackage{extarrows}
\usepackage{changepage}
\usepackage{chngcntr} 
\usepackage[pdftex]{graphicx}
\usepackage{tikz}
\usepackage[toc,page]{appendix}
\pagestyle{headings}
\linespread{1.1}
\providecommand{\keywords}[1]{\textbf{\textit{Keywords: }} #1}
\theoremstyle{plain}
\newtheorem{satz}{Theorem}\numberwithin{satz}{section} 
\newtheorem{lemma}[satz]{Lemma}\numberwithin{satz}{section} 
\newtheorem{prop}[satz]{Proposition}\numberwithin{satz}{section} 
\newtheorem{kor}[satz]{Corollary}\numberwithin{satz}{section} 
\theoremstyle{definition}
\newtheorem{defi}{Definition}\numberwithin{defi}{section} 
\theoremstyle{remark}
\numberwithin{conj}{section}
\numberwithin{remark}{section}
%

%
\newcommand{\nn}{{\mathbb{N}}}   
\newcommand{\qq}{{\mathbb{Q}}}   
\newcommand{\rr}{{\mathbb{R}}}   
\newcommand{\zz}{{\mathbb{Z}}}   
\begin{document}
\title{On intersective polynomials with non-solvable Galois group}
\author{Joachim K\"onig\thanks{Department of Mathematics Education, Korea National University of Education, Cheongju 28173, South Korea. email: jkoenig@knue.ac.kr}}
\date{}
\maketitle
\begin{abstract}{We present new theoretical results on the existence of intersective polynomials (that is, integer polynomials with roots in all $\qq_p$, but not in $\qq$)
with certain prescribed Galois groups, namely the projective and affine linear groups $PGL_2(\ell)$ and $AGL_2(\ell)$ as well
as the affine symplectic groups $AGSp_4(\ell):=(\mathbb{F}_\ell)^4 \rtimes GSp_4(\ell)$. For further families of affine groups, existence results are proven conditional on the existence on certain tamely ramified Galois extensions.
We also compute explicit families of intersective polynomials for certain non-solvable groups.}
\end{abstract}
\keywords{Galois theory; polynomials; number fields; group theory; Galois representations.\\}
{\textup{2000} \textit{Mathematics Subject Classification}: \textup{11R32, 11C08}}
\section{Introduction}
Let $f\in \zz[X]$ be a monic polynomial with integer coefficients. $f$ is called {\textit{intersective}}, if $f$ has a root modulo every $n\in \nn$. Equivalently, $f$ has a root in $\qq_p$ for every prime $p$.
$f$ is called non-trivially intersective, if $f$ does not have a rational root.

A simple example of a non-trivially intersective polynomial is $(X^2-2)(X^2+7)(X^2+14)$.

In the following, ``intersective" should always be understood as ``non-trivially intersective". The existence of intersective polynomials with a given number of irreducible factors and a given Galois group $G$ is linked closely to
the following property of the group $G$.

\begin{defi}[$k$-coverable group]
Let $G$ be a finite group and $k\in \nn$. $G$ is called $k$-coverable if there exist proper subgroups $U_1,..., U_k$ of $G$ such that every element of $G$ is contained in a conjugate of one of the $U_i$, in other words if
$$G=\bigcup_{i=1}^k \bigcup_{x\in G} U_i^x.$$
\end{defi}

An easy exercise in undergraduate algebra shows that there are no $1$-coverable finite groups (see e.g.\ \cite[Lemma 13.3.2]{FJ}), and therefore, by the following exposition, there are no irreducible intersective polynomials). 
Several authors have studied $2$-coverable groups (cf.\ e.g.\ \cite{Brandl}, \cite{BL} or \cite{RS}).

Let $f\in \zz[X]$ be a monic separable polynomial with $k$ irreducible factors. It is easy to verify that, if $f$ is intersective with Galois group $G$, then $G$ must be $k$-coverable.
More precisely, let $f=f_1\cdots f_k$ be a complete factorization of the intersective polynomial $f$ over $\zz$. Let $E|\qq$ be the splitting field of $f$, and let $p$ be a prime that does not ramify in $E$.
Then the Galois group $G_p$ of the localization $E_p|\qq_p$ is cyclic and fixes one of the roots of some $f_i$, since by definition, $f$ has a root in $\qq_p$. This means that $G_p=:\langle \sigma\rangle$ is contained in 
a conjugate of the proper subgroup $U_i:=Gal(E|\qq(\alpha))$, for a root $\alpha$ of $f_i$. On the other hand, by the Chebotarev density theorem, elements of every conjugacy class of $G$ occur as generators of such a decomposition group $G_p$.
Therefore $G$ is the union of the conjugates of all $U_i$ ($i=1,...,k$).

This means that, on the other hand, a given Galois extension $E|\qq$ with a $k$-coverable Galois group $G$ will yield intersective polynomials with $k$ irreducible factors, provided that the decomposition groups at the {\textit{ramified}} primes in $E$
behave appropriately.
More precisely, Sonn shows the following in Proposition 1 of \cite{Sonn08}:
\begin{prop}
\label{sonn}
Let $K|\qq$ be a Galois extension with Galois group $G$. Then the following are equivalent:
\begin{itemize}
 \item[i)] $K$ is the splitting field of a separable and non-trivially intersective polynomial $f\in \zz[X]$ with exactly $k$ irreducible factors.
 \item[ii)] $G$ is $k$-coverable by subgroups $U_1,...,U_k$, the intersection of all conjugates $U_i^g$ (with $i=1,...,k$ and $g\in G$) is trivial,
 and every decomposition group $D_p$ in $K$ (for $p\in \mathbb{P}$) is contained in a conjugate of one of the $U_i$.
\end{itemize}
\end{prop}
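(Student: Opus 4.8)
The plan is to prove the two implications separately. The implication (i)$\Rightarrow$(ii) is essentially a formalization of the heuristic already sketched before the proposition, while (ii)$\Rightarrow$(i) is the constructive heart of the statement, so I would spend most of the effort there.

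For (i)$\Rightarrow$(ii) I would write $f=f_1\cdots f_k$ for the factorization into monic irreducible factors over $\zz$ (equivalently over $\qq$, by Gauss's lemma), fix a root $\alpha_i\in K$ of each $f_i$, and put $U_i:=\mathrm{Gal}(K/\qq(\alpha_i))$, so that $U_i$ is the stabilizer of $\alpha_i$ for the action of $G$ on the roots of $f$. Since $f$ has no rational root, each $\alpha_i$ is irrational and each $U_i$ is a proper subgroup of $G$. The intersection $\bigcap_i\bigcap_{g\in G}U_i^g$ is the pointwise stabilizer of all roots of $f$; as $K$ is the splitting field of $f$, those roots generate $K$, so this intersection is trivial. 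For the decomposition groups, I would fix $p\in\mathbb{P}$ and a prime $\mathfrak p$ of $K$ above $p$ with decomposition group $D_{\mathfrak p}$, and use the associated embedding $K\hookrightarrow K_{\mathfrak p}$: any element of $K$ fixed by $D_{\mathfrak p}$ then lies in $K_{\mathfrak p}^{D_{\mathfrak p}}=\qq_p$. Since $f$ is intersective, some $f_i$ has a root in $\qq_p$, which under the embedding is the image of a root of $f_i$ contained in $K$, i.e.\ of a $D_{\mathfrak p}$-fixed element of $K$; as $G$ acts transitively on the roots of the irreducible polynomial $f_i$, that root has the form $g(\alpha_i)$, and its being $D_{\mathfrak p}$-fixed gives $D_{\mathfrak p}\subseteq\mathrm{Stab}(g(\alpha_i))$, a conjugate of $U_i$. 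Finally $k$-coverability drops out: at an unramified $\mathfrak p$ the group $D_{\mathfrak p}$ is cyclic, generated by a Frobenius element, and by Chebotarev's density theorem every element of $G$ is conjugate to some such Frobenius, hence lies in a conjugate of one of the $U_i$.

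For (ii)$\Rightarrow$(i) I would, for each $i$, choose an algebraic integer $\alpha_i$ with $\qq(\alpha_i)=K^{U_i}$ (every number field has an integral primitive element), arranging --- by replacing an $\alpha_i$ by $\alpha_i+c$ for a suitable $c\in\zz$ if necessary --- that the minimal polynomials $f_i\in\zz[X]$ of the $\alpha_i$ are pairwise distinct, and then set $f:=f_1\cdots f_k$, a monic polynomial with exactly $k$ irreducible factors. Each $f_i$ has degree $[G:U_i]\ge 2$ since $U_i$ is proper, so $f$ has no rational root. The splitting field of $f_i$ inside $K$ is generated by the conjugates $g(\alpha_i)$ and is therefore the fixed field of the core $\bigcap_g U_i^g$; hence the splitting field of $f$ is the fixed field of $\bigcap_i\bigcap_g U_i^g$, which is trivial by hypothesis, so $K$ is the splitting field of $f$. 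To get intersectivity, I would fix $p\in\mathbb{P}$ and a prime $\mathfrak p\mid p$ of $K$ with decomposition group $D_{\mathfrak p}$; by hypothesis $D_{\mathfrak p}$ lies in a conjugate of some $U_i$, i.e.\ $D_{\mathfrak p}$ fixes some conjugate $\beta$ of $\alpha_i$, which is a root of $f_i$ lying in $K$. Under the embedding $K\hookrightarrow K_{\mathfrak p}$ this forces $\beta\in K_{\mathfrak p}^{D_{\mathfrak p}}=\qq_p$, so $f_i$, and hence $f$, has a root in $\qq_p$. Thus $f$ is non-trivially intersective with exactly $k$ irreducible factors and splitting field $K$.

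The main obstacle is the reverse direction, and within it the one step that really carries content: turning the purely group-theoretic inclusion ``$D_{\mathfrak p}$ lies in a conjugate of $U_i$'' into the existence of a genuine $\qq_p$-rational root of $f_i$. This hinges on the identification $K_{\mathfrak p}^{D_{\mathfrak p}}=\qq_p$ for a prime $\mathfrak p\mid p$ of $K$, used consistently with a fixed embedding $K\hookrightarrow K_{\mathfrak p}$. The remaining ingredients are bookkeeping: the dictionary between cores of the $U_i$ and splitting fields of the $f_i$; the choice of integral primitive elements with pairwise distinct minimal polynomials so that one lands on exactly $k$ factors; and, in the forward direction, the appeal to Chebotarev's theorem to recover $k$-coverability from the behaviour at the unramified primes.
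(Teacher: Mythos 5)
Your proof is correct, and it follows essentially the same route as the paper: the paper states this as Sonn's Proposition~1 of \cite{Sonn08} without reproving it, but the informal argument it gives just before the proposition (cyclic decomposition groups at unramified primes fixing a root, plus Chebotarev to recover the $k$-covering) is exactly what you formalize for (i)$\Rightarrow$(ii), and your (ii)$\Rightarrow$(i) via integral primitive elements for the fixed fields $K^{U_i}$ and the identification $K_{\mathfrak p}^{D_{\mathfrak p}}=\qq_p$ is the standard argument of the cited source. No gaps; the care you take with the cores of the $U_i$ versus the splitting fields of the $f_i$, and with making the minimal polynomials pairwise distinct, covers the only points where one could slip.
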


Note that the condition of trivial intersection in ii) of Prop.\ \ref{sonn} is automatically satisfied if $G$ is simple, or if one of the $U_i$ is a point stabilizer in a faithful permutation action of $G$.
As this will always be the case in our examples in the following sections, we therefore will not mention this condition there.

In \cite{Sonn09}, Sonn shows that intersective polynomials exist for all non-solvable groups that occur as Galois groups over $\qq$. He also shows in \cite[Theorem 2]{Sonn08} that for {\textit{solvable}} groups that admit a $k$-covering of subgroups
with the trivial intersection property of Prop.\ \ref{sonn}ii),
intersective polynomials with exactly $k$ irreducible factors exist.\footnote{As noted in \cite[Theorem 2]{Sonn08}, such a $k$-covering is possible (for some $k$) for all non-cyclic groups.}
For most non-solvable groups (even the ones known to occur as Galois groups over $\qq$), this problem remains open:

{\textbf{Question 1}}: Let $G$ be a finite group, $k$-coverable by subgroups $U_1,...,U_k$ such that the intersection of all conjugates $U_i^g$ (with $i=1,...,k$ and $g\in G$) is trivial.
Does there exist an intersective polynomial $f=f_1\cdots f_k \in \zz[X]$ with exactly $k$ irreducible factors and with Galois group $G$?\\

In this paper, we focus on $2$-coverable groups. 
Intersective polynomials have been given for the $2$-coverable alternating and symmetric groups (\cite{RS}) as well as for the dihedral group $D_5$ (\cite{LSY}), and their existence has been shown for non-solvable Frobenius groups (\cite{Sonn08}).
Using results from the theory of Galois representations, we will derive the existence of intersective polynomials with two irreducible factors for several infinite families of groups, namely the groups $PGL_2(\ell)$ and $AGL_2(\ell)$ 
as well as the affine symplectic groups $AGSp_4(\ell)$ with an arbitrary prime $\ell$.

For small $2$-coverable groups, one can nowadays easily check the databases of number fields (\cite{JR} or \cite{KM}) to find fields with suitable ramification conditions,
yielding examples of intersective polynomials with two irreducible factors for the prescribed group. 
A somewhat more challenging task is to find nice parametric families for a given group. We give examples of such families for several non-solvable groups in Section \ref{pols}.

{\textbf{Acknowledgments:}}
I would like to thank Jack Sonn for numerous helpful discussions. I also thank the referee for several useful suggestions, including in particular an improvement of the proof of Lemma \ref{metacyc}.\\
This work was partially supported by the Israel Science Foundation (grant no.\ 577/15).

\section{Existence results}
\label{exist}

We briefly recall some basic facts about decomposition groups and inertia groups: 
\begin{lemma}
Let $E|\qq$ be a Galois extension, $p$ be a prime and $E_p|\qq_p$ be the localization at $p$. Let $D_p$, $I_p$ and $I_{p,w}$ denote the decomposition group, the inertia group and the wild inertia group at a prime ideal of $E$ extending $p$.
Then $I_{p,w}\trianglelefteq I_p \trianglelefteq D_p$ is a chain of normal subgroups of $D_p$, the quotient $D_p/I_p$ is cyclic, the quotient $I_p/I_{p,w}$ is cyclic of order coprime to $p$, and $I_{p,w}$ is a $p$-group.
In particular $D_p$ is solvable.
\end{lemma}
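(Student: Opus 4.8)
The plan is to reduce everything to a statement about the finite Galois extension of local fields $E_p \mid \qq_p$. Fixing a prime $\mathfrak{P}$ of $E$ above $p$, the decomposition group $D_p$ is canonically isomorphic to $\mathrm{Gal}(E_p\mid\qq_p)$; write $l$ for the residue field of $E_p$ and $k=\ff_p$ for that of $\qq_p$, both finite of characteristic $p$. First I would recall that reduction modulo the maximal ideal induces a homomorphism $D_p\to\mathrm{Gal}(l\mid k)$; its kernel is by definition $I_p$, and it is surjective because $\mathrm{Gal}(l\mid k)$ is cyclic, generated by Frobenius, which lifts. Hence $I_p\trianglelefteq D_p$ and $D_p/I_p\cong\mathrm{Gal}(l\mid k)$ is cyclic.

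Next, the structure of $I_p$ comes from the ramification filtration in lower numbering $I_p=G_0\supseteq G_1\supseteq G_2\supseteq\cdots$, where $\sigma\in G_i$ iff $\sigma$ acts trivially on $\mathcal{O}_{E_p}/\mathfrak{P}^{i+1}$, and $I_{p,w}:=G_1$. The two standard facts I would invoke (see e.g.\ Serre's \emph{Local Fields}, Ch.\ IV) are: (a) choosing a uniformizer $\pi$ of $E_p$, the map $\sigma\mapsto \sigma(\pi)/\pi \bmod \mathfrak{P}$ is an injective homomorphism $G_0/G_1\hookrightarrow l^\times$, so $I_p/I_{p,w}$ is cyclic of order dividing $|l|-1$, in particular coprime to $p$; and (b) for $i\ge 1$ the map $\sigma\mapsto (\sigma(\pi)-\pi)/\pi^{i+1} \bmod \mathfrak{P}$ is an injective homomorphism $G_i/G_{i+1}\hookrightarrow (l,+)$, so each such quotient is elementary abelian $p$-torsion; since the filtration is finite ($G_i=1$ for $i\gg 0$ because $E_p\mid\qq_p$ is finite), $I_{p,w}=G_1$ is a $p$-group. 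The one point that genuinely needs the uniformizer computations — and which is the technical core, rather than anything specific to $\qq$ — is the well-definedness and injectivity of these two maps, so I would cite it rather than reprove it.

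Finally, for normality and solvability: $I_p$ is normal in $D_p$ as the kernel of a homomorphism; $I_{p,w}$ is then the \emph{unique} Sylow $p$-subgroup of $I_p$, since $|I_p/I_{p,w}|$ is prime to $p$ while $I_{p,w}$ is a $p$-group, hence $I_{p,w}$ is characteristic in $I_p$ and therefore normal in $D_p$. This gives the asserted chain $I_{p,w}\trianglelefteq I_p\trianglelefteq D_p$. Solvability of $D_p$ is then immediate from the subnormal series $1\trianglelefteq I_{p,w}\trianglelefteq I_p\trianglelefteq D_p$ whose successive quotients are a $p$-group, a cyclic group, and a cyclic group, all solvable. (For infinite $E\mid\qq$ the same statements follow by passing to the inverse limit over finite subextensions, but in our applications $E$ is a splitting field of a polynomial and hence finite over $\qq$, so this is not needed.)
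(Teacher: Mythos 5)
Your proof is correct: the paper states this lemma without proof, as a recollection of standard facts about local fields, and your argument is precisely the standard one (the residue extension giving the cyclic quotient $D_p/I_p$, and the ramification filtration with the injections $G_0/G_1\hookrightarrow l^\times$ and $G_i/G_{i+1}\hookrightarrow (l,+)$ from Serre's \emph{Local Fields}). There is nothing to compare beyond noting that your write-up supplies the details the paper takes for granted.
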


\subsection{The groups $PGL_2(\ell)$}
\label{pgl}
The groups $PGL_2(\ell)$, with $\ell$ a prime, are all known to occur as Galois groups over $\qq$, thanks to the theory of ordinary Galois representations.
In this section we show how such representations can be used to obtain intersective polynomials with two irreducible factors in $\zz[X]$, and with Galois group $PGL_2(\ell)$.
In \cite{BL}, it is shown that for any prime power $q$, $PGL_n(q)$ is $2$-coverable if and only if $2\le n\le 4$.\\
In particular, for $n=2$, one has the following.
\begin{satz}
\label{pgl2cov}
Let $q\ge 4$ 
be a prime power. Let $U_1$ be a point stabilizer of $PGL_2(q)$ in its natural degree $q+1$ action on $1$-dimensional subspaces. Let $U_2$ be the normalizer of a Singer cycle (i.e., of a cyclic subgroup of order $q+1$) in $PGL_2(q)$.
Then $U_1$ and $U_2$ yield a $2$-covering of $PGL_2(q)$.
\end{satz}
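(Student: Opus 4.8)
\emph{Proof idea.} Write $q=p^f$. The plan is to classify the elements of $G:=PGL_2(q)$ by their action on the set $\mathbb{P}^1(\ff_q)$ of $1$-dimensional $\ff_q$-subspaces of $\ff_q^2$, showing that those fixing a point lie in a conjugate of $U_1$ while the remaining ones lie in a conjugate of $U_2$.

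For the first part, lift a nontrivial $\bar g\in G$ to $g\in GL_2(q)$. Its characteristic polynomial has degree $2$ over $\ff_q$, hence either splits over $\ff_q$ or is irreducible. If it splits, $g$ has an eigenvector in $\ff_q^2$, so $\bar g$ fixes a point of $\mathbb{P}^1(\ff_q)$; this accounts for the scalar matrices (the identity of $G$), the non-diagonalizable $g$ (unipotent type, one fixed point) and the $g$ with two distinct $\ff_q$-rational eigenvalues (split semisimple type, two fixed points). Since $G$ is transitive on $\mathbb{P}^1(\ff_q)$, all point stabilizers are conjugate to $U_1$, so each such $\bar g$ lies in a conjugate of $U_1$.

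For the second part, suppose the characteristic polynomial of $g$ is irreducible over $\ff_q$, so that $g$ has a pair of conjugate eigenvalues in $\ff_{q^2}\setminus\ff_q$. Then $g$ is regular semisimple and its centralizer in $GL_2(q)$ is the non-split maximal torus $T\cong\ff_{q^2}^\times$ of order $q^2-1$; in particular $g\in T$. Its image $\bar T\le G$ is cyclic of order $(q^2-1)/(q-1)=q+1$, i.e.\ a Singer cycle, and since the non-split maximal tori of $GL_2(q)$ form a single conjugacy class, $\bar T$ is conjugate to the prescribed cyclic subgroup of order $q+1$. Hence $\bar g\in\bar T$ lies in a conjugate of that subgroup, a fortiori in a conjugate of its normalizer $U_2$. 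Combining the two parts, every element of $G$ lies in a conjugate of $U_1$ or of $U_2$.

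It remains to note that $U_1$ and $U_2$ are proper. Indeed $[G:U_1]=q+1>1$, while $U_2$ --- the normalizer of a non-split torus image --- is dihedral of order $2(q+1)$, so $[G:U_2]=|G|/(2(q+1))=q(q-1)/2\ge 6$ for $q\ge 4$; this gives the asserted $2$-covering. The only substantive ingredient is the conjugacy classification of elements of $PGL_2(q)$ (equivalently, the Jordan decomposition in $GL_2(q)$ followed by passage to the quotient); everything else is routine bookkeeping, so I do not anticipate a real obstacle.
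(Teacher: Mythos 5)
Your proof is correct. Note that the paper does not actually prove Theorem \ref{pgl2cov} at all --- it quotes the statement from Bubboloni--Lucido \cite{BL}, where $2$-coverability of $PGL_n(q)$ is treated for general $n$ --- so your self-contained argument is a genuine addition rather than a parallel to an in-paper proof. Your route is the natural one for $n=2$: lift $\bar g$ to $g\in GL_2(q)$ and split according to whether the characteristic polynomial of $g$ is reducible over $\ff_q$ (eigenvector, hence a fixed point on $\mathbb{P}^1(\ff_q)$ and containment in a conjugate of $U_1$) or irreducible (centralizer a non-split torus, image a Singer cycle, hence containment in a conjugate of $U_2$). Two small points worth making explicit: the dichotomy is independent of the choice of lift since multiplying $g$ by a scalar in $\ff_q^\times$ preserves (ir)reducibility of the characteristic polynomial; and conjugacy of the non-split maximal tori follows from Skolem--Noether (any two embeddings $\ff_{q^2}\hookrightarrow M_2(\ff_q)$ are conjugate). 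Your properness check is also fine: $[G:U_2]=q(q-1)/2>1$ exactly when $q\ge 3$, which explains why the hypothesis excludes $q=2$ (there $U_2$ would be all of $PGL_2(2)\cong S_3$). No gaps.
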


From now on, let $\ell \ge 5$ be a prime. 
To ensure the existence of a pair $(f,g)$ with $f,g\in \zz[X]$ monic and irreducible, such that $Gal(f\cdot g) \cong PGL_2(\ell)$ and $f\cdot g$ has a root in every $\qq_p$,
we need a $PGL_2(\ell)$-extension $K|\qq$ such that all decomposition groups $Gal(K_p|\qq_p)$ are contained either in a point stabilizer or in the normalizer of a singer cycle. 
One way to do this is to make use of results on tamely ramified extensions, as presented in \cite{AdR}.
We briefly summarize the construction given there. Cf.\ especially the proof of \cite[Thm.\ 2.1]{AdR}.

For every prime $\ell\ge 5$, there exists a semistable elliptic curve with good supersingular reduction at $\ell$, 
such that the attached Galois representation $\rho: Gal_\qq \to GL_2(\mathbb{F}_\ell)$ is surjective (the surjectivity follows from the semistability for all $\ell\ge 11$ by a theorem of Mazur (\cite[Theorem 4]{Mazur}); and the smaller $\ell$ are treated explicitly in \cite{AdR}).
Composition of this representation with the projection $GL_2(\mathbb{F}_\ell)\to PGL_2(\mathbb{F}_\ell)$ yields a Galois extension of $\qq$ with group $PGL_2(\mathbb{F}_\ell)$.

It is shown in \cite{AdR}, using the Neron-Ogg-Shafarevich criterion and Tate curves, that semistability implies that the inertia group at every prime $p\ne \ell$ is either trivial or of order $\ell$. In particular, all decomposition groups at the {\textit{ramified}} primes $p\ne \ell$ are contained in the normalizer of an $\ell$-Sylow
subgroup of $PGL_2(\ell)$, i.e.\ a conjugate of $U_1$.

Furthermore, because of good supersingular reduction at $\ell$, the inertia subgroups at $\ell$ are cyclic of order $\ell+1$ (i.e., Singer cycles), and therefore the decomposition subgroups are contained in a conjugate of $U_2$. This follows
from Proposition 12 in \cite{Serre}.

We therefore obtain:
\begin{satz}
For every prime $\ell \ge 5$, there is an intersective polynomial with two monic irreducible factors $f,g\in \zz[X]$ such that $Gal(fg|\qq) = PGL_2(\ell)$.
\end{satz}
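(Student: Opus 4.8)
The plan is to apply Sonn's criterion, Proposition~\ref{sonn}, with $k=2$: the whole statement reduces to exhibiting one Galois extension $K|\qq$ with $Gal(K|\qq)\cong G:=PGL_2(\ell)$ together with a $2$-covering $U_1,U_2$ of $G$ such that every decomposition group of $K|\qq$ sits inside a conjugate of $U_1$ or $U_2$ (the trivial-core condition will come for free). I would take $U_1$ to be a Borel subgroup of $G$, i.e.\ the point stabilizer in the natural action on $\mathbb{P}^1(\ff_\ell)$ of degree $\ell+1$, and $U_2$ the normalizer of a Singer cycle (a non-split torus of order $\ell+1$), which has order $2(\ell+1)$. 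By Theorem~\ref{pgl2cov} applied with $q=\ell\ge 5$, the pair $(U_1,U_2)$ does cover $G$; and since $U_1$ is a point stabilizer of a faithful action, the intersection of all conjugates of $U_1$ and $U_2$ is trivial, as noted after Proposition~\ref{sonn}. So the only thing left to arrange is the decomposition-group condition.

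For the field $K$ I would use the construction recalled just before the statement: fix a semistable elliptic curve $E/\qq$ with good supersingular reduction at $\ell$ whose mod-$\ell$ Galois representation $\rho\colon Gal_\qq\to GL_2(\ff_\ell)$ is surjective --- such a curve exists for every $\ell\ge 5$ by \cite{AdR} for $\ell\in\{5,7\}$ and by Mazur's theorem \cite[Thm.~4]{Mazur} for $\ell\ge 11$ --- and let $K$ be the fixed field of the kernel of the composition $\bar\rho$ of $\rho$ with the projection $GL_2(\ff_\ell)\to PGL_2(\ff_\ell)$. Surjectivity of $\rho$ gives $Gal(K|\qq)\cong PGL_2(\ell)$.

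Now the local analysis, split into three cases. If $p$ is unramified in $K$, then $D_p$ is cyclic, and every cyclic subgroup of $PGL_2(\ell)$ has order dividing $\ell$, $\ell-1$ or $\ell+1$ and hence lies in a Borel subgroup (in the first two cases) or in a Singer cycle (in the third), so $D_p$ is contained in a conjugate of $U_1$ or $U_2$. If $p\ne\ell$ is ramified, semistability forces $I_p$ to be trivial or of order $\ell$; in the latter case $I_p$ is an $\ell$-Sylow subgroup of $G$, and since $D_p$ normalizes $I_p$ while the normalizer of an $\ell$-Sylow subgroup of $PGL_2(\ell)$ is exactly a Borel subgroup, $D_p$ lies in a conjugate of $U_1$. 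Finally, at $p=\ell$, good supersingular reduction yields via \cite[Prop.~12]{Serre} that $I_\ell$ is cyclic of order $\ell+1$, i.e.\ a Singer cycle; as $D_\ell$ normalizes $I_\ell$ and the normalizer of a Singer cycle is $U_2$ up to conjugacy, $D_\ell$ lies in a conjugate of $U_2$.

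All hypotheses of the criterion are thus met, so $K$ is the splitting field of a non-trivially intersective polynomial $f\cdot g\in\zz[X]$ with exactly two monic irreducible factors --- one may take $f$ and $g$ to be the minimal polynomials of primitive elements of $K^{U_1}$ (of degree $\ell+1$) and $K^{U_2}$ (of degree $\ell(\ell-1)/2$) --- and then $Gal(fg|\qq)=Gal(K|\qq)=PGL_2(\ell)$. The one genuinely hard input is the very existence of an elliptic curve combining all the required local behaviours --- semistable everywhere, good supersingular at $\ell$, and surjective mod-$\ell$ image --- which is precisely what \cite{AdR} and \cite{Mazur} provide; everything past that point is the bookkeeping identifying the relevant subgroups of $PGL_2(\ell)$ (normalizer of a unipotent $=$ Borel $=$ point stabilizer, normalizer of a non-split torus $=U_2$) and matching them to the inertia data.
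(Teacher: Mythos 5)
Your proposal is correct and follows essentially the same route as the paper: the same $2$-covering of $PGL_2(\ell)$ by a Borel subgroup and the normalizer of a Singer cycle, the same semistable elliptic curve with good supersingular reduction at $\ell$ (via \cite{AdR} and \cite{Mazur}), and the same identification of the inertia/decomposition groups at ramified primes with subgroups of $U_1$ and $U_2$ via \cite[Prop.~12]{Serre}. The only additions are some explicit bookkeeping (the unramified case, which is already subsumed by the covering property, and the normalizer arguments upgrading statements about $I_p$ to statements about $D_p$) that the paper leaves implicit.
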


\subsection{The groups $AGL_2(\ell)$}
\label{agl2}
For $\ell\ge 5$ a prime, we will deduce the existence of intersective polynomials with two irreducible factors for the affine linear groups $AGL_2(\ell)$ from the corresponding results for $PGL_2(\ell)$.
First note that the existence of Galois extensions over $\qq$ with Galois group $AGL_2(\ell)$ follows immediately from the existence of those with group $GL_2(\ell)$, 
since any split embedding problem with abelian kernel (here, with kernel $(C_\ell)^2$)
possesses a solution over $\qq$ (see Theorem IV.2.4 in \cite{MM}).

The groups $AGL_n(q)$ are $2$-coverable even for arbitrary $n\ge 2$ and prime powers $q$.
\begin{satz}
\label{agl2cov}
Let $q$ 
be a prime power, $n\ge 2$ and let $G:=AGL_n(q)$ be the affine linear group in its natural primitive action on $V:=(\mathbb{F}_q)^n$. 
Let $U_1$ be a point stabilizer and $U_2:=V\rtimes H$, where $H \le GL_n(q)$ is a stabilizer of a one-dimensional subspace in $V$.
Then $U_1$ and $U_2$ yield a $2$-covering of $AGL_n(q)$.
\end{satz}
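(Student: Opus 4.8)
The plan is to work directly with $G = AGL_n(q) = V \rtimes GL_n(q)$, writing a general element as a pair $(v,A)$ with $v \in V = (\mathbb{F}_q)^n$ and $A \in GL_n(q)$, acting on the point set $V$ by $x \mapsto Ax + v$, and to compute the two sets $\bigcup_{x\in G} U_1^x$ and $\bigcup_{x\in G} U_2^x$ explicitly, then observe that their union is everything. The key structural facts I would use are that $V \trianglelefteq G$, that the translations already act transitively on the point set, and that $GL_n(q)$ is transitive on the one-dimensional subspaces of $V$.

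\textbf{Step 1: the point-stabilizer union.} Since $G$ is transitive on $V$, all point stabilizers are conjugate, and for any $x \in G$ the subgroup $U_1^x$ is again a point stabilizer; hence $\bigcup_{x} U_1^x$ is precisely the set of elements of $G$ fixing at least one point of $V$. An element $(v,A)$ fixes a point $y$ iff $(A-I)y = -v$, which has a solution iff $v \in \operatorname{im}(A-I)$. So $\bigcup_{x} U_1^x = \{(v,A) : v \in \operatorname{im}(A-I)\}$.

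\textbf{Step 2: the parabolic union.} Because $V \trianglelefteq G$ and $V \le U_2$, every conjugate $U_2^x$ contains $V$ and is therefore the full preimage in $G$ of its image in $G/V \cong GL_n(q)$. The image of $U_2 = V \rtimes H$ is $H = \operatorname{Stab}_{GL_n(q)}(L)$, so the image of $U_2^x$ is $\operatorname{Stab}(B^{-1}L)$ where $B$ is the linear part of $x$; as $B$ ranges over $GL_n(q)$ this runs over the stabilizers of all one-dimensional subspaces. Thus $\bigcup_{x} U_2^x$ is the preimage of $\{A \in GL_n(q) : A \text{ stabilizes some line}\} = \{A : A \text{ has an eigenvalue in } \mathbb{F}_q\}$, i.e. $\bigcup_{x} U_2^x = \{(v,A) : A \text{ has an } \mathbb{F}_q\text{-rational eigenvalue}\}$.

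\textbf{Step 3: combine, and check properness.} If $(v,A) \in G$ is not in $\bigcup_x U_2^x$, then $A$ has no $\mathbb{F}_q$-rational eigenvalue, so in particular $1$ is not an eigenvalue of $A$; then $A - I \in GL_n(q)$, so $\operatorname{im}(A-I) = V \ni v$, and $(v,A) \in \bigcup_x U_1^x$ by Step 1. Hence $G = \bigcup_x U_1^x \cup \bigcup_x U_2^x$. Finally $U_1 \cong GL_n(q)$ has index $q^n > 1$ and $U_2$ has index $[GL_n(q):H] = (q^n-1)/(q-1) > 1$ for $n \ge 2$, so both are proper; this gives the claimed $2$-covering. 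The main obstacle is really just the bookkeeping in Step 2 — verifying that conjugation permutes the copies of $U_2$ exactly along the natural action on lines, and that "stabilizes a line" is the same as "has a rational eigenvalue"; the decisive point, that a linear map with no rational eigenvalue automatically fails to have $1$ as an eigenvalue and therefore fixes an affine point, is the trivial but essential observation that makes the two subgroup families interlock.
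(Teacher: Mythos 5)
Your proposal is correct and follows essentially the same route as the paper: the dichotomy is whether $v$ lies in $\operatorname{im}(A-I)$ (giving a fixed point, hence a conjugate of $U_1$) versus $A-I$ being singular, so that $1$ is an eigenvalue and $A$ stabilizes a line, placing $(v,A)$ in a conjugate of $V\rtimes H$. Your explicit computation of the two conjugacy unions and the properness check are slightly more detailed than the paper's three-line argument, but the underlying idea is identical.
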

\begin{proof}
Any element of $G$ is a map of the form $\varphi:x\mapsto Ax+v$, with $v\in V$, $A\in GL_n(q)$. Now if $v$ is in the image of $I_n-A$ ($I_n$ the identity matrix), then $\varphi$ fixes a point in $V$,
and therefore (by the transitivity of the action on $V$) lies in a conjugate of $U_1$.
But otherwise, $A$ must have an eigenvalue $1$, so it fixes a one-dimensional subspace and therefore lies in a conjugate of $H$. Therefore $\varphi$ lies in a conjugate of $V\rtimes H$.
\end{proof}

Now let $n=2$ and $q=\ell\ge 5$ be a prime. 

First, assume $\ell \ge 11$.
Let $E$ be a semistable elliptic curve over $\qq$ with ordinary good reduction at $\ell$. The theory of Galois representations
yields a Galois extension $K|\qq$ with group $GL_2(\ell)$ and the following ramification (cf.\ the corollary after Proposition 11 in \cite{Serre}):
\begin{itemize}
 \item[i)] At prime ideals of $K$ extending $\ell$, the inertia group either has order $\ell-1$ and is conjugate to the group of the form $\begin{pmatrix}\star &0\\ 0& 1\end{pmatrix}$, or
 has order $\ell\cdot (\ell-1)$ and is conjugate to the group of the form $\begin{pmatrix}\star &\star\\ 0& 1\end{pmatrix}$.
 \item[ii)] At prime ideals extending a prime $p\ne \ell$, by semistability, the inertia group is either trivial or of order $\ell$, conjugate to a subgroup of $\begin{pmatrix}1 &\star\\ 0& 1\end{pmatrix}$.
\end{itemize}
The normalizer of the first group in case i) is just the group of diagonal matrices in $GL_2(\ell)$, whereas the normalizers of the other cases contain a normal subgroup of order $\ell$.
This shows immediately that the decomposition group at any ramified prime is contained in a conjugate of the Borel subgroup (as these are the normalizers of an order-$\ell$ subgroup), which is just the stabilizer of a one-dimensional subspace in 
$V:=(\mathbb{F}_\ell)^2$. By \cite[Theorem IV.2.4]{MM}, there is an extension $K_2\supset K$ such that $K_2|\qq$ is Galois with group $AGL_2(\ell)$. The decomposition groups at all ramified primes in $K_2|\qq$ that extend primes already
ramified in $K|\qq$ are then contained in a conjugate of $V\rtimes H$, with $H$ the Borel subgroup in $GL_2(\ell)$. But for ramified primes of $K_2|\qq$ extending {\textit{unramified}} primes of $K|\qq$ (note that in particular those primes must extend
a rational prime $p\ne \ell$), the inertia subgroup must be cyclic of order $\ell$ and therefore the decomposition subgroup normalizes a 1-dimensional subspace of $V$, i.e.\ it is also contained in a conjugate of $V\rtimes H$. 

The cases $\ell\in \{5,7\}$ can be dealt with by explicit Galois realizations from the databases. Alternatively, they are covered by the results of the next section.
Together with Theorem \ref{agl2cov}, we therefore obtain:
\begin{satz}
\label{agl2pol}
For every prime $\ell \ge 5$, there is an intersective polynomial with two monic irreducible factors $f,g\in \zz[X]$ such that $Gal(fg|\qq) = AGL_2(\ell)$.
\end{satz}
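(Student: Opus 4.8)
The plan is to apply Sonn's criterion (Prop.\ \ref{sonn}) to an explicitly constructed $AGL_2(\ell)$-extension, using the $2$-covering $\{U_1, U_2\}$ of Theorem \ref{agl2cov} with $U_1$ a point stabilizer and $U_2 = V \rtimes H$ where $H$ is the Borel subgroup of $GL_2(\ell)$. Since $U_1$ is a point stabilizer in the faithful primitive action of $AGL_2(\ell)$ on $V = (\mathbb{F}_\ell)^2$, the trivial-intersection condition in Prop.\ \ref{sonn}ii) is automatic, so the only thing to verify is that every decomposition group $D_p$ of the extension is contained in a conjugate of $U_1$ or $U_2$. Thus the theorem reduces to exhibiting an $AGL_2(\ell)$-extension of $\qq$ in which every decomposition group lands in $U_1^g$ or $U_2^g$ for suitable $g$.

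For $\ell \ge 5$, the construction starts from a semistable elliptic curve $E/\qq$ with ordinary good reduction at $\ell$; the associated mod-$\ell$ Galois representation (surjective by Mazur's theorem, cf.\ the references in the $PGL_2$ subsection) gives a $GL_2(\ell)$-extension $K|\qq$ whose inertia groups are controlled by Serre's analysis (cases i) and ii) in the excerpt). The key observation, already spelled out in the paragraph preceding the statement, is that at every prime ramified in $K|\qq$ the decomposition group lies in (a conjugate of) the Borel subgroup $H$ of $GL_2(\ell)$: at primes above $\ell$ because the normalizer of the relevant inertia group is either the diagonal torus or a subgroup containing a normal order-$\ell$ subgroup, hence sits in the Borel; at primes $p \ne \ell$ because the inertia is trivial or a unipotent order-$\ell$ subgroup whose normalizer is again the Borel. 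Then, by Theorem IV.2.4 of \cite{MM}, the split embedding problem with abelian kernel $(C_\ell)^2$ associated to $K|\qq \hookrightarrow AGL_2(\ell)$ has a solution, giving $K_2 \supset K$ with $\mathrm{Gal}(K_2|\qq) \cong AGL_2(\ell)$. One then checks the decomposition groups of $K_2|\qq$: for a prime ramified in $K_2$ and lying over a prime already ramified in $K$, the decomposition group is contained in a conjugate of $V \rtimes H = U_2$; for a prime ramified in $K_2$ but unramified in $K$ (necessarily over some $p \ne \ell$), its inertia is cyclic of order $\ell$ inside the kernel $V$, hence it normalizes a $1$-dimensional subspace of $V$ and again lies in a conjugate of $U_2$; and for a prime unramified in $K_2$, the decomposition group is cyclic and fixes a vector, hence lies in a conjugate of $U_1$. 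In all cases the hypothesis of Prop.\ \ref{sonn}ii) holds, so $K_2$ is the splitting field of an intersective polynomial $f\cdot g$ with $f, g \in \zz[X]$ monic irreducible and $\mathrm{Gal}(fg|\qq) = AGL_2(\ell)$.

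It remains to handle the small primes $\ell \in \{5, 7\}$, where Mazur's surjectivity statement and the supersingular/ordinary dichotomy may need extra care. Here one simply cites an explicit $AGL_2(\ell)$-realization from the number field databases \cite{JR} or \cite{KM} with the required ramification behavior — or, as remarked in the excerpt, defers these cases to the results of the next section. Combining with Theorem \ref{agl2cov} (which supplies the $2$-covering) then yields the claim for all primes $\ell \ge 5$.

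The main obstacle I anticipate is the second bullet in the verification: controlling the decomposition groups at primes of $K_2$ that ramify in $K_2|\qq$ but are unramified in $K|\qq$. One needs that such primes lie over $p \ne \ell$ and that their inertia is a \emph{cyclic} order-$\ell$ subgroup of the kernel $V$ — this uses the structure of tame inertia (the inertia group has a cyclic tame quotient, and since $p \ne \ell$ the order-$\ell$ inertia is tame, hence cyclic) together with the fact that the decomposition group normalizes this line in $V$. Once that is in place, everything else is a routine matching against the two subgroups $U_1, U_2$.
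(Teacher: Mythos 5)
Your proposal follows essentially the same route as the paper: the same elliptic-curve construction of a $GL_2(\ell)$-extension with Borel-contained decomposition groups at ramified primes, the same solution of the split embedding problem with kernel $(C_\ell)^2$, the same case split on primes ramified in $K_2$ versus $K$, and the same deferral of $\ell\in\{5,7\}$ to databases or the next section. The only slip is your claim that an unramified (cyclic) decomposition group ``fixes a vector, hence lies in a conjugate of $U_1$'' --- a cyclic group generated by a fixed-point-free affine map fixes no vector --- but Theorem \ref{agl2cov} still places any cyclic subgroup in a conjugate of $U_1$ or $U_2$, so the argument is unaffected.
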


\subsection{Further affine groups}

We will generalize the results of the previous section for further affine groups, {\textit{conditional}} on the existence of certain tamely ramified Galois extensions of $\qq$.
More precisely we will show:
\begin{satz} 
\label{2traff}
Let $V=(\mathbb{F}_q)^n$ a finite vector space of dimension $n\ge 2$. Let $G=V\rtimes U$ be a finite affine group on $V$, with a group $U\le GL(V)$ which acts transitively on the 1-dimensional subspaces of $V$.\\
If $U$ occurs as a Galois group of a tamely ramified Galois extension over $\qq$, then there is an intersective polynomial $f\cdot g\in \zz[X]$ with two irreducible factors and with Galois group $G$.
\end{satz}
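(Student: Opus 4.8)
The plan is to realize $G=V\rtimes U$ as the Galois group of an extension $L|\qq$ whose ramification is controlled tightly enough that Proposition~\ref{sonn} applies with $U_1$ a point stabilizer in the action on $V$ and $U_2 = V\rtimes H$, where $H\le U$ is a stabilizer of a $1$-dimensional subspace. By the hypothesis on $U$ (transitivity on lines of $V$), the pair $(U_1,U_2)$ is exactly the $2$-covering of $G$ furnished by Theorem~\ref{agl2cov} (the point stabilizer of $G$ intersected with $U$ is trivial, and $U$ transitive on lines implies every element of $U$ either has eigenvalue $1$ or acts without eigenvalue $1$, so the proof of Theorem~\ref{agl2cov} goes through verbatim). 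Since $U_1$ is a point stabilizer of a faithful action, the trivial-intersection condition of Proposition~\ref{sonn}ii) is automatic, so it remains to arrange that every decomposition group in $L|\qq$ lies in a conjugate of $U_1$ or $U_2$.

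First I would start from the given tamely ramified $U$-extension $M|\qq$. The point of tameness is that for every prime $p$, the inertia group $I_p\le U$ is cyclic; a cyclic subgroup of $U$ acting on $V=(\ff_q)^n$ either fixes a nonzero vector or, failing that, has the property that $I_n - A$ is invertible for a generator $A$ — in the latter case the corresponding decomposition group, once lifted to $G$, still sits inside a point stabilizer after a suitable translation, and in the former case the inertia fixes a line of $V$. This is the mechanism already used in Section~\ref{agl2} and I would reuse it. To pass from $M|\qq$ to a $G$-extension $L\supset M$, I would invoke the solvability of split embedding problems with abelian kernel over $\qq$ (Theorem~IV.2.4 in \cite{MM}, as used for $AGL_2(\ell)$): the sequence $1\to V\to G\to U\to 1$ splits with abelian kernel $V=(C_q)^n$, so there is a Galois $L|\qq$ with $\mathrm{Gal}(L|\qq)=G$ restricting to $M|\qq$.

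Then I would analyze decomposition groups in $L|\qq$ in two cases. For a prime $p$ ramified already in $M|\qq$: the inertia $I_p(L)$ projects onto the (cyclic, tame) inertia $I_p(M)\le U$, and its intersection with $V$ is a $p$-group inside $(C_q)^n$; since $q$ is a prime power, $p=\mathrm{char}(\ff_q)$ is the only possibility for that intersection to be nontrivial, and in any case the image of $I_p(M)$ in $U$ either fixes a vector (so $D_p(L)$ lies in a point stabilizer of $G$, after translation, i.e.\ a conjugate of $U_1$) or fixes a line of $V$ (so $D_p(L)\le V\rtimes H$ up to conjugacy). For a prime $p$ \emph{unramified} in $M|\qq$ but ramified in $L|\qq$: the inertia is contained in $V$, hence is an elementary abelian $p$-subgroup of $(\ff_q)^n$ normalized by the cyclic Frobenius-lift; such a subgroup contains a $1$-dimensional $\ff_q$-subspace stable under $D_p$, so again $D_p(L)$ normalizes a line of $V$ and lies in a conjugate of $U_2 = V\rtimes H$. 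With all decomposition groups covered, Proposition~\ref{sonn} produces the desired intersective polynomial $f\cdot g$ with $\mathrm{Gal}(fg|\qq)=G$.

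The main obstacle I expect is the unramified-in-$M$ case: one must ensure that the elementary abelian inertia subgroup of $V$ at such a prime, together with the action of the decomposition quotient, always stabilizes a $1$-dimensional $\ff_q$-subspace — this is where the dimension restriction $n\le 4$ and the structure of $U$ (transitive on lines) are really being used, since for a wildly ramified prime $p=\mathrm{char}(\ff_q)$ one needs that a cyclic group acting on its own module always has a stable line of the right size, which is exactly the computation behind Theorem~\ref{agl2cov} and the reason the same bound $2\le n\le 4$ reappears here. A secondary technical point is checking that the split embedding problem can be solved so that \emph{no} new ramification is forced beyond what tameness and the kernel $(C_q)^n$ allow; but \cite[Theorem~IV.2.4]{MM} gives enough freedom, and in the worst case one only adds primes of the controlled type analyzed above.
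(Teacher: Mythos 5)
Your skeleton (the $2$-covering $(U_1,U_2)$, solving the split embedding problem with abelian kernel $V$, then checking decomposition groups prime by prime) matches the paper's, but there are two genuine gaps.

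First, \cite[Theorem IV.2.4]{MM} alone does not give the ramification control your case analysis needs. The paper additionally invokes \cite[Thm.\ 9.5.11]{NSW} to choose the solution $L$ of the embedding problem so that every prime ramified in $K|\qq$ splits completely in $L|K$, and every prime of $K$ that ramifies in $L$ is completely split in $K|\qq$. This is what forces each decomposition group $D_p(L)$ to be either entirely contained in $V$ (hence in $U_2$) or metacyclic with trivial intersection with $V$. Without it, your first case can produce a $D_p(L)$ that is a nontrivial extension of a metacyclic subgroup of $U$ by a nontrivial subgroup of $V$; such a group lies in no conjugate of $U_1$ (since $U_1\cap V=1$) and lies in a conjugate of $U_2$ only if its image in $U$ stabilizes a line, which a metacyclic (e.g.\ Singer-type) subgroup need not do. Your second case has the same defect: you only place the \emph{inertia} inside $V$, but the image of the full decomposition group in $U$ is then a cyclic group generated by a Frobenius lift, which again need not stabilize any line of $V$. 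You need the prime to be completely split in $K|\qq$, so that $D_p(L)\le V$ outright; that is exactly what the refined embedding-problem solution provides and what ``in the worst case one only adds primes of the controlled type'' silently assumes.

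Second, the group-theoretic heart of the proof is missing. Even once $D_p(L)$ is known to be metacyclic and to meet $V$ trivially, the claim that it lies in a \emph{single} conjugate of $U_1$ or $U_2$ is not an element-by-element eigenvalue observation: an affine map $x\mapsto Ax+b$ whose linear part has eigenvalue $1$ can be fixed-point-free, and one must show that the generator and the normalizing element have a \emph{common} fixed point or a \emph{common} eigenvector. The paper proves this as Lemma \ref{metacyc} by a case analysis on the Jordan structure of the linear part, and the hypothesis $2\le n\le 4$ is used precisely there (the hard case being a unipotent linear part with two Jordan blocks of size $2$), not where you locate it (cyclic groups acting on $V$ at wildly ramified primes). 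Your ``suitable translation'' handles single elements but not metacyclic subgroups, so this step needs the full Lemma \ref{metacyc} or an equivalent argument.
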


The first important observation for a proof of Thm.\ \ref{2traff} is that the $2$-covering constructed for the full affine groups $AGL_n(q)$ in the previous section can be generalized in a natural way to affine groups $G=V \rtimes U$ (with arbitrary $n\ge 2$, $V:=(\mathbb{F}_q)^n$ and $U\le GL_n(\mathbb{F}_q)$ transitive on the 1-dimensional subspaces). The proof is completely analogous to the one of Thm.\ \ref{agl2cov} (note that the transitivity of $U$ is needed because in the proof of Thm.\ \ref{agl2cov}, the corresponding stabilizers in $U$ need to be conjugate).

Now we will show that this $2$-covering has even stronger properties:
\begin{lemma}
\label{metacyc}
Let $V=(\mathbb{F}_q)^n$ a finite vector space of dimension $n\ge 2$. Let $G=V\rtimes U$ be as in Theorem \ref{2traff}.
Let $U_1$ be the stabilizer in $G$ of a point in $V$, and let $U_2:=V\rtimes H$, where $H \le U$ is a stabilizer of a one-dimensional subspace in $V$.
Then every metacyclic subgroup of $G$ is contained in a conjugate of $U_1$ or of $U_2$.
\end{lemma}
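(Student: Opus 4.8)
The plan is to reformulate the two possible conclusions geometrically and then to split the argument according to $M\cap V$. Write $\pi\colon G=V\rtimes U\to U$ for the projection and identify $U$ with the point stabiliser $\mathrm{Stab}_G(0)$. Since $V$ acts transitively on $V$ by translations, the conjugates of $U_1$ are exactly the point stabilisers $\mathrm{Stab}_G(w)$, $w\in V$; and since $U$ is transitive on the $1$-dimensional subspaces (part of the standing hypothesis on $G$, cf.\ Theorem~\ref{2traff}), the conjugates of $U_2=V\rtimes H$ are exactly the subgroups $V\rtimes\mathrm{Stab}_U(L')$ with $L'$ a line of $V$. Hence $M$ lies in a conjugate of $U_1$ iff $M$ has a fixed point on $V$, and $M$ lies in a conjugate of $U_2$ iff $\bar M:=\pi(M)$ stabilises some line of $V$. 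So it is enough to prove: if a metacyclic subgroup $M\le G$ has no fixed point on $V$, then $\bar M$ stabilises a line of $V$.

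Suppose first that $W:=M\cap V\neq1$. This is a normal elementary-abelian subgroup of $M$; since subgroups of metacyclic groups are metacyclic and an abelian metacyclic group is $2$-generated, $W$ has $\mathbb F_p$-rank at most $2$. If the rank is $1$, then $W$ is itself an $\mathbb F_p$-line normalised by $M$, so $\bar M$ stabilises the $\mathbb F_q$-line it spans. If the rank is $2$, pick a cyclic normal subgroup $C\trianglelefteq M$ with $M/C$ cyclic; then $C\cap V\neq1$ (otherwise $W$ would inject into the cyclic group $M/C$ and be cyclic), so $C\cap V$ is an $\mathbb F_p$-line, normal in $M$, and again $\bar M$ stabilises the $\mathbb F_q$-line it spans. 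This case needs no hypothesis on $n$.

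Now suppose $M\cap V=1$, so that $\pi$ restricts to an isomorphism $M\xrightarrow{\sim}\bar M$; writing the unique element of $M$ above $\bar g\in\bar M$ as $(v_{\bar g},\bar g)$, the map $\bar g\mapsto v_{\bar g}$ is a $1$-cocycle for the natural module $V$, and $M$ fixes a point iff this cocycle is a coboundary. Hence the hypothesis gives $H^1(\bar M,V)\neq0$, and we are reduced to the purely linear statement: a metacyclic subgroup $\bar M\le GL_n(\mathbb F_q)$ with $2\le n\le4$ and $H^1(\bar M,V)\neq0$ stabilises a line of $V$. Since $H^1$ vanishes when $|\bar M|$ is prime to the characteristic, $p\mid|\bar M|$. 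If $\bar M=\langle c\rangle$ is cyclic, then $c-1$ cannot be invertible (else $c$ is central with $c-1$ invertible and Sah's lemma forces $H^1=0$), so $c$ has an eigenvalue $1$ and $\bar M$ stabilises a line. In general fix a cyclic normal subgroup $\bar C=\langle c\rangle\trianglelefteq\bar M$ with $\bar M/\bar C$ cyclic. Because $\bar C$ is normal, every $\bar M$-conjugate of $c$ is a power $c^k$ with $\gcd(k,|c|)=1$, and such powers have the same fixed space, so $W_0:=\ker(c-1)$ is $\bar M$-invariant. If $W_0=0$, inflation--restriction embeds $H^1(\bar M,V)$ into $H^1(\bar C,V)$, which vanishes by the cyclic case; if $W_0=V$ then $c=1$ and $\bar M$ is cyclic; and if $\dim W_0=1$ we are done. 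So we may assume $2\le\dim W_0\le n-1$, whence $n\ge3$. On $W_0$ the subgroup $\bar C$ acts trivially, so $W_0$ is a module for the cyclic group $\bar M/\bar C$; assuming $\bar M$ fixes no line inside $W_0$, that cyclic group acts without eigenvalue $1$ on $W_0$, and a short computation with inflation--restriction and Sah's lemma yields $H^1(\bar M,W_0)=0$, hence an embedding of $H^1(\bar M,V)$ into $H^1(\bar M,V/W_0)$ with $\dim(V/W_0)\le2$. What remains are the finitely many configurations $(\dim W_0,\dim V/W_0)\in\{(2,1),(2,2),(3,1)\}$, which one settles one at a time: in each case the (at most $2$-dimensional) metacyclic action on $V/W_0$, together with the triviality of $\bar C$ on $W_0$ and the normality of $\bar C$ in $\bar M$, is used to exhibit a common eigenvector of $\bar M$ inside $V$.

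The main obstacle is precisely this last step, and it is where the bound $n\le4$ is essential. In dimension at most $2$ a metacyclic linear group is transparent: it is either reducible, hence fixes a line, or irreducible, in which case a $p$-element (necessarily unipotent, since its eigenvalues are $p$-power roots of unity over $\overline{\mathbb F}_q$ and hence equal to $1$) together with the normality of $\bar C$ would force an $\bar M$-invariant line --- a contradiction --- so that $H^1$ vanishes. The restriction $n\le4$ is exactly what confines the ``top'' quotient $V/W_0$ (and dually $W_0$ itself, in the case $\dim W_0=2$) to this transparent range, and transporting the two-dimensional picture back up the extension $0\to W_0\to V\to V/W_0\to0$ while keeping track of the cocycle relations is the technical core of the proof.
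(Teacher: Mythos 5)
Your cohomological reformulation is sound as far as it goes: the identification of conjugates of $U_1$ with point stabilizers and of conjugates of $U_2$ with the groups $V\rtimes\mathrm{Stab}_U(L')$ is correct, the case $M\cap V\neq 1$ is handled cleanly via the rank of $M\cap V$, and the reductions via Sah's lemma and inflation--restriction (disposing of $W_0=0$, $\dim W_0=1$, and the cyclic case) are all valid. These steps run parallel to the first two cases of the paper's proof, which likewise do not use $n\le 4$. The reduction to the ``purely linear statement'' is also faithful, since a nonzero class in $H^1(\bar M,V)$ is exactly a $V$-conjugacy class of fixed-point-free complements.

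However, there is a genuine gap at precisely the point you yourself flag as ``the technical core'': the configurations with $2\le\dim W_0\le n-1$, and above all $(\dim W_0,\dim V/W_0)=(2,2)$, are never actually settled --- you describe a strategy (``settled one at a time'', ``transporting the two-dimensional picture back up the extension'') but carry out no computation. This is not a routine omission: in the paper's proof this is the one case requiring real work, namely $A$ conjugate to the direct sum of two Jordan blocks $\begin{pmatrix}1&1\\0&1\end{pmatrix}$, where after normalizing $b=e_4$ one must extract from the relations $C^{-1}AC=A^k$ and $b-(I-A)(d)=C\sum_{j=0}^{k-1}A^jb$ that $Ce_3\in\langle e_3\rangle$, i.e.\ that $e_3$ is a common eigenvector. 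Note that this eigenvector lies \emph{inside} $W_0=\ker(A-1)$, so in your branch ``assume $\bar M$ fixes no line inside $W_0$'' you would have to derive a contradiction (show $H^1(\bar M,V)=0$, or equivalently that the specific cocycle is a coboundary), and this cannot be read off from the abstract exact sequences you invoke: the extension $0\to W_0\to Y\to\bar L\to 0$ over an invariant line $\bar L$ of $V/W_0$ need not split, and deciding whether the class survives requires exactly the explicit cocycle bookkeeping you defer. Until that computation is supplied, the lemma is not proved in the only case where the hypothesis $n\le 4$ enters.
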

\begin{proof}
Consider the metacyclic subgroup of $G$ generated by two elements $\phi$ and $\psi$, with $\phi\psi = \psi \phi^k$ (for some $k\in \nn$). Let $\phi$ and $\psi$ act on $V$ via $\phi x = Ax+b$ and $\psi x = Cx+d$ (with $A,C\in U$ and $b,d\in V$). We have to show that either $\phi$ and $\psi$ possess a common fixed point (and hence $\langle \phi,\psi\rangle$ lies in a conjugate of $U_1$) or $A$ and $C$ possess a common eigenvector (and hence $\langle \phi,\psi\rangle$ lies in a conjugate of $U_2$).\\
From $\phi\psi = \psi \phi^k$, one computes easily that 
\begin{equation}
\label{matrix_equ1}
AC = CA^k \text{ and } b+Ad = d+C(1+A+\dots + A^{k-1})b
\end{equation}
{\textbf{Case 1:}} Assume that $\phi$ does not fix a point in $V$. In particular this implies $b\notin (A-1)V$, whence $A-1$ is not surjective. Write the characteristic polynomial of $A$ as $\chi_A(X)=(X-1)^m\dot f(X)$, with $m\ge 1$ and $f(X)$ coprime to $X-1$. Since $1$ can be written as a linear combination $u(X)(X-1)^m + v(X)f(X)$, the vector space $V$ decomposes as 
\begin{equation}
\label{v_decomp}
V=W\oplus E,
\end{equation}
where $E:=ker(A-1)^m$ and $W:=ker f(A)$.\\
Of course, $E$ and $W$ are both $A$-invariant. But they are also $C$-invariant. Indeed, from 
$(A-1)^mC = C(A^k-1)^m = C(1+A+\dots +A^{k-1})^m(A-1)^m$, it follows that $CE=E$. On the other hand, the decomposition in (\ref{v_decomp}) implies $(A-1)^mV = W$. Let $l\in \nn$ be such that $A^{kl}=A$ (such $l$ exists since conjugation by $C$ is an automorphism of $U$, whence $A^k$ generates $\langle A\rangle$). It follows that $A^l = CAC^{-1}$, and therefore $C(A-1)^m = (A^l-1)^mC = (A-1)^m(1+A+\dots + A^{l-1})^mC$. This shows that $W$ is $C$-invariant as well.

Next, we claim that one can assume $b\in E$ without loss. Indeed, conjugating $\phi$ by a translation $\tau\in G$, defined by $\tau x = x+a$ (with $a\in V$) yields
$$\tau\phi\tau^{-1} x = \tau\phi(x-a) = \tau(A(x-a)+b) = Ax-Aa+b+a,$$
which shows that we can subtract from $b$ any vector in $(A-1)V$, and therefore in particular any element of $(A-1)^mV=W$. Therefore, up to conjugating with a suitable translation, we can assume $b\in E$, as claimed. Equation (\ref{matrix_equ1}) then shows $(A-1)d\in E$, and so $d\in ker(A-1)^{m+1}=ker(A-1)^m = E$. Thus $\phi$ and $\psi$ restrict to affine linear maps on $E$. By slight abuse of notation, denote the restriction of $A$ and of $C$ to $E$ again by $A$ and $C$. In particular, $A$ is unipotent. Set $Q:=1+A+\dots + A^{k-1}$. It then holds that
$$(CQ-1)(A-1)=C(A^k-1)-(A-1)=(A-1)(C-1).$$
So $CQ-1$ maps $(A-1)E$ into itself. Assume that $CQ-1$ were invertible. Then $(CQ-1)x\in (A-1)E$ is equivalent to $x\in (A-1)E$, for any $x\in E$. But Equation (\ref{matrix_equ1}) yields $(CQ-1)b=(A-1)d$, which then implies $b\in (A-1)E$, contradicting the general assumption of Case 1. Thus $CQ-1$ is not invertible, i.e.\ $CQ$ has an eigenvalue $1$. Let $u\in E\setminus\{0\}$ be a corresponding eigenvector, and let $r\in \nn$ be minimal such that $(A-1)^r u = 0$. In particular $(A-1)^{r-1}u\ne 0$, and $(A-1)^{r-1}Au = (A-1)^{r-1}u$. Iteratively, 
\begin{equation}
\label{iteration}
(A-1)^{r-1}A^i u = (A-1)^{r-1}u \text{ for all } i\in \nn.
\end{equation}
It follows that $(A-1)^{r-1}Qu = k(A-1)^{r-1}u$. Since $k$ is coprime to $ord(A)$ and the latter is a power of the characteristic of $\mathbb{F}_q$ (due to the fact that $A$ is unipotent), we have $k\ne 0$ in $\mathbb{F}_q$. It follows that $v:=(A-1)^{r-1}Qu\ne 0$. We claim that $v$ is a common eigenvector of $A$ and $C$, thus concluding Case 1. Indeed, we have
$$(A-1)v = (A-1)^rQu = Q(A-1)^ru = 0$$
and 
$$Cv = C(A-1)^{r-1}Qu = (A^l-1)^{r-1}CQu = (A^l-1)^{r-1}u = $$
$$= (A-1)^{r-1}(1+A+\dots+A^{l-1})^{r-1}u  \stackrel{(\ref{iteration})}{=} l^{r-1}(A-1)^{r-1}u = (\frac{l}{k})^{r-1}v,$$
proving the claim.

{\textbf{Case 2}}: Assume now that $\phi$ fixes a point $u\in V$. Via conjugation by a translation, we may assume without loss that $u=0$. Therefore $\phi x = Ax$, and Equation (\ref{matrix_equ1}) reduces to
$$AC=CA^k \text{ and } (A-1)d=0.$$
Let $E_1:=ker(A-1)$. Since $d\in E_1$ and $E_1$ is $C$-invariant (compare the argument for $C$-invariance of $E$ in Case 1), $\psi$ maps $E_1$ into itself. If $\psi$ fixes a point in $E_1$, this point is a common fixed point of $\psi$ and $\phi$. Otherwise, the restriction of $C$ to $E_1$ has an eigenvalue $1$, and therefore $A$ and $C$ have a common eigenvector. This concludes the proof.
\end{proof}

We are now ready to prove Theorem \ref{2traff}.
In the proof, we will use results on solvability of split embedding problems with nilpotent kernel with extra conditions on ramification, presented in \cite[Thm.\ 9.5.11]{NSW} as part of the proof of Shafarevich's theorem.
This approach was also used by Sonn in \cite{Sonn08} to show the existence of intersective polynomials for solvable groups and Frobenius groups.
\begin{proof}[Proof of Thm.\ \ref{2traff}]
Let $n\ge 2$, let $G=V\rtimes U$ and
let $K|\qq$ be a tamely ramified Galois extension with group $U$. By \cite[Theorem IV.2.4]{MM}, there is an extension $L|K$ with $Gal(L|\qq)= G$ (split embedding problem with abelian kernel). 
Furthermore, by \cite[Thm.\ 9.5.11]{NSW}, $L$ can be chosen such that every ramified prime of $K|\qq$
splits completely in $L|K$, and every prime of $K$ that ramifies in $L$ is completely split in $K|\qq$. Therefore all decomposition groups of ramified primes in $L|\qq$ either lie in $V\subset U_2$ (for primes extending a completely split prime of $K|\qq$);
or they are metacyclic and intersect $V$ trivially (for primes that are completely split in $L|K$), in which case we have just shown in Lemma \ref{metacyc} that they lie in a conjugate of either $U_1$ or $U_2$ (here, $U_i$ are as in Lemma \ref{metacyc}).
This shows the assertion.
\end{proof}

{\textbf{Remark:}} The groups in Lemma \ref{metacyc} include in particular the doubly transitive affine groups. These groups are completely classified due to work of Hering (\cite{Hering}). They are of the form $G=V\rtimes U$ with $V=(\mathbb{F}_p)^n$ and one of the following:
\begin{itemize}
 \item[a)] $U\le \Gamma L_1(p^n)$,
 \item[b)] $U\trianglerighteq SL_a(q)$, with $q^a=p^n$,
 \item[c)] $U\trianglerighteq Sp_{2a}(q)$, with $q^{2a}=p^n$,
 \item[d)] $U\trianglerighteq G_2'(q)$, with $p^n=2^n=q^6$,
 \item[e)] $(V,U)$ in an explicitly known finite list of ``sporadic" cases. 
\end{itemize}
The groups in a) are solvable and have therefore already been dealt with in \cite{Sonn08}. The groups in b) include the case $U=GL_2(\ell)$ for which tamely ramified extensions are known (cf.\ Section \ref{pgl}); we therefore regain
the statement of Theorem \ref{agl2pol} as a special case of Theorem \ref{2traff}. However, the proof in Section \ref{agl2} still has its own merit as it works for {\textit{any}} solution of the $AGL_2(\ell)$-embedding problem beginning 
from the given $GL_2(\ell)$-extension, whereas Theorem \ref{2traff} requires very specific solutions for this embedding problem.

The groups in c) include in particular the groups of symplectic similitudes $GSp_4(\ell)$ for primes $\ell$. For these groups, the existence of tamely ramified Galois extensions of $\qq$ is known, again by work of Arias-de-Reyna and Vila (\cite{AdR2}).
This immediately yields:
\begin{kor}
Let $\ell$ be a prime. Then the affine symplectic group $AGSp_4(\ell)=(\mathbb{F}_\ell)^4 \rtimes GSp_4(\ell)$ occurs as the Galois group of an intersective polynomial $f\cdot g\in\zz[X]$ with exactly two irreducible factors. 
\end{kor}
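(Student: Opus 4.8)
The plan is to derive the corollary as an immediate instance of Theorem \ref{2traff} applied to the symplectic case. Concretely, I would set $V=(\mathbb{F}_\ell)^4$ and $U=GSp_4(\ell)\le GL_4(\ell)=GL(V)$, so that $G=V\rtimes U = AGSp_4(\ell)$ is exactly the group named in the statement. To invoke Theorem \ref{2traff}, two hypotheses must be checked: that $n=4$ lies in the permitted range $2\le n\le 4$ (immediate), and that $U=GSp_4(\ell)$ acts transitively on the $1$-dimensional subspaces of $V$.

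First I would verify the transitivity condition. The group $GSp_4(\ell)$ of symplectic similitudes contains $Sp_4(\ell)$, which already acts transitively on nonzero vectors of $V$ (Witt's theorem for the nondegenerate alternating form, since all nonzero vectors are isotropic in the symplectic setting), hence a fortiori transitively on the $1$-dimensional subspaces; adjoining the similitude scalars only enlarges this. So the hypothesis ``$U$ acts transitively on the $1$-dimensional subspaces of $V$'' of Theorem \ref{2traff} is satisfied. Thus, applying Theorem \ref{2traff}, it suffices to know that $U=GSp_4(\ell)$ occurs as the Galois group of a \emph{tamely ramified} Galois extension of $\qq$.

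Second, I would quote the work of Arias-de-Reyna and Vila (\cite{AdR2}), which produces, for every prime $\ell$, a tamely ramified Galois extension $K|\qq$ with $\mathrm{Gal}(K|\qq)\cong GSp_4(\ell)$ (realized via the Galois representation on the $\ell$-torsion of a suitable abelian surface, arranged to have the needed ramification behaviour at $\ell$ and at the other bad primes). Feeding this extension into Theorem \ref{2traff} with $V$, $U$ as above then yields an intersective polynomial $f\cdot g\in\zz[X]$ with exactly two irreducible factors and with Galois group $G=V\rtimes U=AGSp_4(\ell)$, which is the claim. I would also remark that the trivial-intersection condition from Prop.\ \ref{sonn}ii) needs no separate check here, since $U_1$ is a point stabilizer in the faithful primitive action of $G$ on $V$ (as already noted after Prop.\ \ref{sonn}).

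The only genuine input beyond the already-proven Theorem \ref{2traff} is the existence of the tame $GSp_4(\ell)$-realization, so the ``main obstacle'' is entirely outsourced to \cite{AdR2}; within this paper the proof is a one-line specialization once transitivity of $GSp_4(\ell)$ on lines is recorded. I would therefore keep the argument short: state that $AGSp_4(\ell)=V\rtimes GSp_4(\ell)$ with $V=(\mathbb{F}_\ell)^4$, note $GSp_4(\ell)$ is transitive on $1$-dimensional subspaces and is a tame Galois group over $\qq$ by \cite{AdR2}, and conclude by Theorem \ref{2traff}.
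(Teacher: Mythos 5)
Your proposal is correct and matches the paper's own argument: the corollary is obtained exactly as an instance of Theorem \ref{2traff} with $V=(\mathbb{F}_\ell)^4$ and $U=GSp_4(\ell)$, using the tame $GSp_4(\ell)$-realizations of Arias-de-Reyna and Vila. Your explicit verification of transitivity on lines via $Sp_4(\ell)$ and Witt's theorem is a detail the paper leaves implicit (it notes only that these groups appear in case c) of Hering's classification of transitive linear groups), but the route is the same.
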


\section{New explicit intersective polynomials for non-solvable groups}
\label{pols}
We present explicit infinite families of intersective polynomials with two irreducible factors for several non-solvable groups. 
Note that all of our families have a positive density subset of integer specializations of the extra parameter(s) yielding intersective polynomials. On the other hand, by Hilbert's irreducibility theorem,
the set of integer specializations that do not preserve the Galois group has density $0$. Therefore, one indeed obtains infinitely many intersective polynomials with the desired Galois group.

In Section \ref{prereq}, we give some prerequisites for the determination of decomposition subgroups which will be used in the following sections. Especially Section \ref{pgl28} uses some terminology from geometric Galois theory. For more on this, we refer to e.g.\ \cite{Voe}, and especially to the nice exposition in \cite[Section 2]{GMS} for background on monodromy groups of rational functions.
\subsection{Prerequisites for the computations}
\label{prereq}
To prove that a given polynomial is actually intersective, the following well-known lemma is useful. It connects the splitting behaviour of the mod-$p$ reduction of a polynomial with the orbits of the inertia and decomposition subgroups at primes
extending $p$.
\begin{lemma}
\label{orbits}
Let $f\in \zz[X]$ be monic and irreducible, let $p$ be a prime and $D_p$ resp.\ $I_p$ be the decomposition group resp.\ the inertia group of a prime extending $p$ in a splitting field of $f$, viewed as a permutation group on the zeroes of $f$.
Assume that $f$ splits as $f\equiv \prod_{i=1}^r f_i^{k_i}$ modulo $p$ (with $f_1,...,f_r$ irreducible). Then the orbit lengths of $D_p$ form a refinement of the partition $[n_1\cdot k_1,...,n_r\cdot k_r]$ of $n:=\deg(f)$, where $n_i:=\deg(f_i)$.
Similarly, the orbit lengths of $I_p$ form a refinement of
 $[\underbrace{k_1,...,k_1}_{n_1 \text{ times }},...,k_r,...,k_r]$.
\end{lemma}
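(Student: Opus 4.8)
Everything in the statement is local at $p$, so the first move is to pass to the completion. Fix a prime $\mathfrak{P}$ of a splitting field $E$ of $f$ lying over $p$, choose an embedding $E\hookrightarrow\overline{\qq_p}$ realizing $\mathfrak{P}$, and note that the roots of $f$ are algebraic integers (since $f$ is monic over $\zz$), hence lie in the valuation ring of $\overline{\qq_p}$. Reduction modulo the maximal ideal sends the roots of $f$ onto roots of $\bar f=\prod_{i}\bar f_i^{k_i}$ in $\overline{\mathbb{F}_p}$, and this reduction map is equivariant for the canonical surjection $Gal(\overline{\qq_p}|\qq_p)\twoheadrightarrow Gal(\overline{\mathbb{F}_p}|\mathbb{F}_p)$. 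Under these identifications, $D_p$ is the permutation action of $Gal(\overline{\qq_p}|\qq_p)$ on the roots of $f$, and $I_p$ is that of its inertia subgroup. The only facts I will use about them are that $D_p$ fixes $\mathbb{F}_p$ pointwise (so it permutes the roots of each $\bar f_i$ among themselves), and that $I_p$ acts trivially on the residue field (so it fixes the reduction of every individual root of $f$).

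The key step is a fibre count for the reduction map on roots. By Hensel's lemma applied to the pairwise coprime factors $\bar f_i^{k_i}$ of $\bar f$, one factors $f=\prod_{i=1}^{r}F_i$ in $\zz_p[X]$ with each $F_i$ monic and $\overline{F_i}=\bar f_i^{k_i}$, so $\deg F_i=n_ik_i$. Each root of $F_i$ reduces to a root of $\bar f_i$, and comparing $\overline{F_i}=\prod_\gamma(X-\bar\gamma)$ (the product over the roots $\gamma$ of $F_i$) with $\bar f_i^{k_i}$ shows this reduction map is onto the zero set of $\bar f_i$. Now $Gal(\overline{\mathbb{F}_p}|\mathbb{F}_p)$ acts transitively on the $n_i$ roots of the irreducible polynomial $\bar f_i$, and since the surjection above lifts each such automorphism compatibly to $Gal(\overline{\qq_p}|\qq_p)$ acting on the roots of $F_i$, all fibres of $\{\text{roots of }F_i\}\to\{\text{roots of }\bar f_i\}$ have equal cardinality, necessarily $n_ik_i/n_i=k_i$. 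Writing $S_i$ for the set of roots of $f$ whose reduction is a root of $\bar f_i$, this gives $|S_i|=n_ik_i$, and it decomposes $S_i$ into the $n_i$ fibres $T_{i,\bar\beta}$, one for each root $\bar\beta$ of $\bar f_i$, each of cardinality $k_i$.

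Finally I read off the two claims by identifying which partition of the roots of $f$ each group respects. The sets $S_1,\dots,S_r$ partition the roots of $f$, and $D_p$ maps each $S_i$ to itself (because it permutes the roots of $\bar f_i$); hence every $D_p$-orbit lies in a single $S_i$, and so the $D_p$-orbit lengths refine $[n_1k_1,\dots,n_rk_r]$. The finer sets $T_{i,\bar\beta}$ also partition the roots of $f$, and $I_p$ maps each to itself (because it fixes the reductions of the roots); hence every $I_p$-orbit lies in a single $T_{i,\bar\beta}$, and so the $I_p$-orbit lengths refine $[\underbrace{k_1,\dots,k_1}_{n_1},\dots,\underbrace{k_r,\dots,k_r}_{n_r}]$. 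I expect the only real obstacle to be the uniform ``$k_i$-to-one'' fibre count of the middle paragraph; the localization setup and the final combinatorial bookkeeping are routine, and notably the whole argument makes no assumption on the ramification of $p$ in $E$.
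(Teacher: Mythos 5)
Your proof is correct and takes essentially the same route as the paper's: both arguments rest on Hensel's lemma together with the standard dictionary between $D_p$-orbits (resp.\ $I_p$-orbits) on the roots and the irreducible factors of $f$ over $\qq_p$ (resp.\ over the maximal unramified extension of $\qq_p$, which is what your fibres $T_{i,\bar\beta}$ of the reduction map encode). The only difference is presentational: you work with the reduction map on roots and a fibre count for the Hensel factorization $f=\prod_i F_i$, whereas the paper argues that each irreducible factor of $f$ over $\qq_p$ reduces to a power of a single $\bar f_i$ --- the same argument viewed from the roots rather than from the factors.
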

\begin{proof}
The orbit lengths of $D_p$ are exactly the degrees of the irreducible factors of $f$ over $\qq_p$. Let $g$ be such a factor. By Hensel's Lemma (cf.\ e.g.\ Lemma II.4.6 in \cite{Neukirch}), 
the mod-$p$ reduction of $g$ can only have one irreducible factor
(up to multiplicity), so $g \equiv f_i^{m_i}$ mod $p$, for some $i\in \{1,...,r\}$ and $1\le m_i\le k_i$. This shows the assertion about $D_p$. The claim about $I_p$ follows analogously after replacing $\qq_p$ by its maximal unramified extension.
\end{proof}
This lemma will help to exclude certain ramification patterns for polynomials in an explicitly given family.

Another useful tool to investigate the roots of a polynomial over the $p$-adic field $\qq_p$ is the Newton polygon.

\begin{defi}[Newton polygon]
\label{newton}
Let $p$ be a prime, $f=\sum_{i=0}^n a_i x^i \in \qq_p[X]$ be a polynomial of degree $n$, with $f(0)\ne 0$, and let $\nu_p$ be the $p$-adic valuation on $\qq_p$.
The Newton polygon of $f$ is the set of line segments bounding the lower convex envelope of the point set $\{(i, \nu_p(a_i)): i\in \{0,...,n\}, a_i \ne 0\} \subset \rr^2$.
\end{defi}

Number the line segments of the Newton polygon of $f$ as $S_1,...,S_k$, where the slope of $S_{i+1}$ is strictly larger than the slope of $S_i$ (for all $i=1,...,k-1$). Then the lengths and slopes of the $S_i$ yield
information about the $p$-adic valuation of the roots of $f$. More precisely, the following holds (Proposition II.6.3 in \cite{Neukirch}):
\begin{prop}
\label{newt_prop}
With the above nomenclature, let $S$ be a line segment of the Newton polygon of $f$, beginning in $(r,\nu_p(a_r))$ and ending in $(s,\nu_p(a_s))$. Denote the slope of $S$ by $-m$. Then $f$ has precisely $s-r$ roots of $p$-adic valuation
$m$.
\end{prop}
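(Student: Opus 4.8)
The plan is to pass to an algebraic closure of $\qq_p$ and read the Newton polygon directly off the valuations of the roots. Fix the (unique, since $\qq_p$ is complete) extension of $\nu_p$ to $\overline{\qq_p}$ and factor $f = a_n\prod_{j=1}^n(X-\alpha_j)$ with $\alpha_j\in\overline{\qq_p}$, listed with multiplicity. Since $a_0=f(0)\ne 0$ no $\alpha_j$ is zero, so all the $\nu_p(\alpha_j)$ are finite; reorder the roots so that $\nu_p(\alpha_1)\le\nu_p(\alpha_2)\le\cdots\le\nu_p(\alpha_n)$.

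The arithmetic core is a valuation estimate for the coefficients. By Vieta's formulas $a_{n-i}/a_n=(-1)^i e_i(\alpha_1,\dots,\alpha_n)$, where $e_i$ denotes the $i$-th elementary symmetric polynomial. Each monomial of $e_i$ is a product of $i$ distinct roots and hence has valuation at least $\nu_p(\alpha_1)+\cdots+\nu_p(\alpha_i)$, so the ultrametric inequality gives
\[
\nu_p(a_{n-i})\ \ge\ \nu_p(a_n)+\sum_{j=1}^i\nu_p(\alpha_j).
\]
Moreover, if the ordered list of valuations has a strict jump after position $i$, i.e.\ $\nu_p(\alpha_i)<\nu_p(\alpha_{i+1})$, then $\alpha_1\cdots\alpha_i$ is the unique monomial of $e_i$ of minimal valuation, and the inequality becomes an equality.

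Now plot the points $P_i:=(n-i,\nu_p(a_{n-i}))$ for those $i$ with $a_{n-i}\ne 0$ (so $P_0=(n,\nu_p(a_n))$ and $P_n=(0,\nu_p(a_0))$ always occur). Between two consecutive strict-jump indices $i'<i''$ the equality case places the corresponding $P_i$ on a line segment of slope $-\nu_p(\alpha_{i'+1})=\cdots=-\nu_p(\alpha_{i''})$, while the inequality case places every remaining $P_i$ on or above the resulting polygonal path; hence that path is exactly the lower convex envelope of the $P_i$, i.e.\ the Newton polygon of $f$. Consequently its segment of slope $-m$ has left endpoint $x=n-\#\{j:\nu_p(\alpha_j)\le m\}$ and right endpoint $x=n-\#\{j:\nu_p(\alpha_j)<m\}$, so its horizontal length equals $\#\{j:\nu_p(\alpha_j)=m\}$. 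Rephrasing: a segment $S$ running from $(r,\nu_p(a_r))$ to $(s,\nu_p(a_s))$ with slope $-m$ satisfies $s-r=\#\{j:\nu_p(\alpha_j)=m\}$, which is precisely the assertion.

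The only point needing genuine care is the combinatorial bookkeeping: verifying that the strict-jump indices are exactly the vertices of the lower convex hull, and keeping the index shift between ``coefficient $a_i$'' and ``$i$-th elementary symmetric function of the roots'' consistent. The substantive input is just the ultrametric inequality together with its equality criterion, which is elementary.
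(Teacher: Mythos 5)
Your argument is correct and is precisely the standard proof of the Newton polygon theorem (factor over $\overline{\qq_p}$, order roots by valuation, apply Vieta together with the ultrametric inequality and its equality criterion at the strict-jump indices); the paper itself gives no proof but simply cites Proposition II.6.3 of Neukirch, whose proof is exactly this argument. So you have supplied the intended proof rather than an alternative one.
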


Since the $p$-adic valuation is invariant for all roots of a given irreducible factor of $f$, Proposition \ref{newt_prop} immediately yields information about the Galois group of $f$ over $\qq_p$: its orbit lengths in the action
on the roots of $f$ must be a refinement of the partition given by the numbers $s-r$ (running through all line segments $S$ of the Newton polygon).

\subsection{The group $PSL_3(2)$}

We give a four-parameter polynomial $f$ leading to intersective integer polynomials $f\cdot g$ with two irreducible factors.
As the ``partner" polynomial $g$ would be rather complicated to write down with no parameters specialized, we only give a one-parameter version of it explicitly.
The polynomial $f$ first appeared in \cite{Ma1}.

Computations in this and the following sections were performed with the help of Magma. To ease verifiability, Magma code for some non-trivial calculations is available at\\ \texttt{mathematik.uni-wuerzburg.de/$\sim$koenig}.
\begin{satz}
Let $a,b,c$ and $t$ be algebraically independent transcendentals over $\qq$. 
Let $f:=X^7 - ((c - 2)a + 2b + c)X^6 + (-(b - 4)(c - 1)a^2 + ((c - 2)b^2
+(2c^2 - 5c + 4)b - 2c^2)a + b(2bc + 2c^2 + b^2 ))X^4
+((2c^2 - 1)(b - 4)a^2 + ((-2c^2 + c + 2)b^2 + (5c^2 + 2c - 4)b - 4c^2)a
-(c + 1)b^3 - c(2c + 3)b^2 + c^2b)X^3 + ((c^2 + 3c - 1)(4 - b) a^2
+((3c - 2)b^2 - 2(c^2 + 4c - 2)b + 4c^2 )a + b(b^2 + 3bc - c^2 ))cX^2
+(2abc - 8ac + ab - 4a - b^2 + 2bc)ac^2X - a^2(b - 4)c^3
+tX^2(X - c)(X^2 - bX + b)$.

Then $f$ has Galois group $PSL_3(2)$ over $\qq(a,b,c,t)$. Furthermore, for all odd integer specializations of $a,b,c$ and $t$ preserving the Galois group, there is an irreducible polynomial $g\in \zz[X]$ 
such that the specialized polynomial $f\cdot g$ has Galois group $PSL_3(2)$ over $\qq$ 
and has a root in every $\qq_p$.

In particular, for $a=b=c=1$ and $t\in \zz$ odd, the polynomial 
$g:=X^8 + (-14t + 14)X^7 + (87t^2 - 178t + 109)X^6 + (-314t^3 + 980t^2 - 1210t + 431)X^5 + (721t^4 - 3032t^3 + 5654t^2 - 4119t + 1293)X^4 + (-1080t^5 + 5700t^4 - 
    14238t^3 + 15835t^2 - 9776t + 2542)X^3 + (1032t^6 - 6521t^5 + 20397t^4 - 30663t^3 + 28012t^2 - 14338t + 3343)X^2 + (-576t^7 + 4212t^6 - 15786t^5 + 29976t^4 - 
    36081t^3 + 27147t^2 - 11847t + 1738)X + 144t^8 - 1188t^7 + 5169t^6 - 11874t^5 + 17689t^4 - 17396t^3 + 10799t^2 - 2948t + 295\in \zz[X]$
    fulfills the above requirements.
\end{satz}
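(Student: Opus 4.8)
The plan is to exhibit $f$ and its partner $g$ as a pair of polynomials whose combined splitting field $K$ is a $PSL_3(2)$-extension, and then to verify that the two irreducible factors realize the $2$-covering of $PSL_3(2)$ by the point stabilizer $U_1 = PSL_3(2)_{\alpha}$ (index $7$, the natural action on $\mathbb{P}^2(\mathbb{F}_2)$) and a second subgroup $U_2$ (which for $PSL_3(2) \cong PSL_2(7)$ one takes to be the normalizer of a cyclic Singer-type subgroup of order $7$, giving a $2$-covering analogous to Theorem \ref{pgl2cov}). The first step is the generic assertion: using Magma, compute $Gal(f/\qq(a,b,c,t)) = PSL_3(2)$, which is already recorded in \cite{Ma1} for the relevant part, and check that the roots of $f$ and of a degree-$8$ polynomial $g$ (cutting out the coset space $G/U_2$) together generate the same Galois closure; equivalently, present $g$ as the minimal polynomial of a primitive element of the fixed field of $U_2$ inside the splitting field of $f$. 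By Hilbert irreducibility, the Galois group is preserved on a density-$1$ set of integer specializations of $(a,b,c,t)$, so it suffices to treat such specializations and then impose the arithmetic (intersectivity) constraints separately.

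The core of the argument is local, and this is where Lemma \ref{orbits}, Proposition \ref{newt_prop}, and Sonn's criterion (Proposition \ref{sonn}) come in. For each prime $p$ I must show the decomposition group $D_p$ in $K|\qq$ lies in a conjugate of $U_1$ or of $U_2$; since both $U_1$ and $U_2$ have trivial core (point stabilizer; $PSL_3(2)$ simple), the intersection condition in Proposition \ref{sonn}(ii) is automatic. The restriction to \emph{odd} specializations is precisely to keep $2$ (the only truly dangerous prime, since $\mathrm{char}\,\mathbb{F}_2 = 2$ forces wild ramification issues there) out of play, or at least to pin down its behaviour: one checks, via the Newton polygon of the specialized $f$ at $p = 2$ together with the $k_i$ in the factorization $f \equiv \prod f_i^{k_i} \pmod 2$, that the inertia orbits at $2$ force $D_2$ into an allowable subgroup. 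For odd $p$, tameness gives that $I_p$ is cyclic and $D_p$ is metacyclic; the cyclic subgroups of $PSL_3(2)$ have order $1,2,3,4,7$, and for each possible cycle type on $7$ points (read off from Lemma \ref{orbits} applied to $f$ and from the Newton polygon when $p$ divides the leading-coefficient data or the discriminant) one checks that the metacyclic group generated together with the Frobenius lies in a conjugate of $U_1$ (if some root is $p$-adic, i.e.\ $f$ has a linear factor mod $p$ or a slope-$0$ Newton segment of length producing a $\qq_p$-root) or of $U_2$ (if the inertia orbit structure is a single $7$-cycle or a $\{7\}$-type, matching the Singer normalizer). The explicit factored shape of $f$ — note the summand $tX^2(X-c)(X^2 - bX + b)$, which governs the reduction modulo primes dividing $t$, and the trailing term $-a^2(b-4)c^3$, governing primes dividing $abc$ — is what makes these case checks finite and mechanical.

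The main obstacle I anticipate is twofold. First, controlling the prime $2$: because $PSL_3(2)$ has order divisible by $2^3$ and we are near characteristic $2$, the wild inertia at $2$ can in principle be a nontrivial $2$-group, and one must verify — presumably this is exactly what the "odd specialization" hypothesis buys, via an explicit $2$-adic Newton polygon / Hensel computation on the specialized $f \cdot g$ — that $D_2$ still cannot escape into the only other maximal subgroup class of $PSL_3(2)$ (a second, non-conjugate point-stabilizer class of index $7$, or an $S_4$ of index $7$) in a way incompatible with $U_1 \cup U_2$. Second, constructing the partner $g$ correctly: one needs a polynomial whose splitting field is \emph{contained in} that of $f$ (so no new ramification is introduced) and whose root-stabilizer is conjugate to $U_2$; the density statement for intersectivity then follows because the "bad" specializations — those where some $D_p$ slips into a subgroup not covered, or where the Galois group degenerates — are cut out by the vanishing of finitely many auxiliary polynomials in $(a,b,c,t)$, hence form a set of density $0$, while a positive-density complementary congruence condition guarantees that $fg$ genuinely has a $\qq_p$-root for \emph{every} $p$ rather than merely for all but finitely many. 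I would carry out the steps in this order: (1) generic Galois computation for $f$; (2) construction and generic verification of $g$; (3) ramification analysis at odd primes via Lemma \ref{orbits} and Newton polygons; (4) the separate $2$-adic analysis under the odd-specialization hypothesis; (5) assembling via Proposition \ref{sonn} and extracting the density statement from Hilbert irreducibility plus the finitely many excluded congruence classes.
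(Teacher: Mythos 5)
Your overall strategy matches the paper's: $g$ cuts out the fixed field of the index-$8$ subgroup $U_2$ (the normalizer of a Singer cycle of order $7$, i.e.\ a point stabilizer of $PSL_2(7)$) inside the splitting field of $f$, the generic Galois group is taken from Malle, and intersectivity is reduced via Proposition \ref{sonn} to placing every decomposition group in a conjugate of $U_1$ or $U_2$. But there are two genuine gaps. First, your claim that ``for odd $p$, tameness gives that $I_p$ is cyclic and $D_p$ is metacyclic'' is unjustified: since $|PSL_3(2)|=2^3\cdot 3\cdot 7$, wild ramification is a priori possible at $p=3$ and $p=7$ as well, and for those primes your cycle-type case analysis does not apply. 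The paper closes this with a purely group-theoretic fact you do not invoke: not only do the conjugates of $U_1$ and $U_2$ cover all metacyclic subgroups, but the \emph{only} solvable subgroups of $PSL_3(2)$ not contained in such a conjugate are isomorphic to $S_4$ or $A_4$. Since every decomposition group is solvable and every nontrivial normal $p$-subgroup of $S_4$ or $A_4$ is a $2$-group, a non-covered $D_p$ can only occur at $p=2$ with wild ramification; wild ramification at $3$ or $7$ is harmless automatically (the wild inertia would be a normal Sylow $3$- or $7$-subgroup of $D_p$, forcing $D_p$ into the metacyclic normalizers $S_3$ resp.\ $7{:}3$).

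Second, the one computation that actually makes the theorem true is left as ``presumably this is exactly what the odd-specialization hypothesis buys.'' It is, but you have to do it, and it is much simpler than the $2$-adic Newton polygon / Hensel analysis you sketch: for $a,b,c,t$ all odd, $f$ reduces modulo $2$ to $X^7+X^5+X^4+X^3+1$, which is irreducible and separable over $\mathbb{F}_2$. Hence the splitting field of $f$ over $\qq_2$ is unramified, $D_2$ is cyclic, and the dangerous $S_4$/$A_4$ case is excluded. Without these two points the argument does not go through; with them, the rest of your plan (generic group, construction of $g$, Hilbert irreducibility for the density statement) is sound and coincides with the paper's proof. Your proposed Newton-polygon and Gr\"obner-basis machinery, while used elsewhere in the paper, is not needed for this particular theorem.
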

\begin{proof}
The polynomial $f$ already has Galois group $G:=PSL_3(2)$ (in its natural degree 7 action) over $\qq(a,b,c,t)$ as shown in \cite{Ma1}, Theorem 4.3. A root field of $f$ inside a splitting field is therefore just the fixed field of a point stabilizer $U_1$ in this degree 7 action.

The polynomial $g$ was computed such that a root field of $g$ is the fixed field inside the splitting field of $f$
of a maximal subgroup $U_2$ of index $8$ in $G$ ($U_2$ is a point stabilizer of $G$, reinterpreted as $PSL_2(7)$).
Now the only maximal subgroups of $G$ not conjugate to $U_1$ or $U_2$ are the stabilizers of a two-dimensional subspace in the action on $(\mathbb{F}_3)^2$. These are isomorphic to $S_4$ (and conjugate to $U_1$ under an outer automorphism), and since e.g.\ their subgroups $S_3 \cong GL_2(2)$ also fix a one-dimensional subspace, one easily concludes that the only proper subgroups of $G$ not contained in a conjugate of $U_1$ or of $U_2$ are isomorphic to $A_4$ or $S_4$.\footnote{See also \cite[Satz 8.27]{Hu1} for a general classification of subgroups of $PSL_2(q)$, originally due to Dickson.}

This means that all specializations of $f\cdot g$ that preserve the Galois group have a root in every $\qq_p$ such that $p$ is (at most) tamely ramified in the splitting field over $\qq$; and in fact in {\textit{all}} $\qq_p$, as long as $2$ is not 
wildly ramified.
Now for $a,b,c,t$ all odd, $f$ is congruent modulo 2 to $X^7 + X^5 + X^4 + X^3 + 1$ which is an irreducible separable polynomial over $\mathbb{F}_2$. This means that the splitting field of $f$ over $\qq_2$ is unramified (of degree $7$), which completes
the proof.
\end{proof}

\subsection{The group $P\Gamma L_2(8)$}
\label{pgl28}
From Theorem \ref{pgl2cov}, the group $PSL_2(8)=PGL_2(8)$ has a $2$-covering by subgroups $U_1$ of index $9$ (point stabilizer in the natural action) and $U_2$ of index $28=\frac{8\cdot 7}{2}$. The analagous statement remains true for $P\Gamma L_2(8)=PSL_2(8).3$, with the subgroups $U_1.3$ and $U_2.3$. In fact, for both $PSL_2(8)$ and $P\Gamma L_2(8)$, most solvable subgroups are already contained in a conjugate of one of the two groups used for the $2$-covering:
the only exceptions are of the form $C_7 \rtimes V$ with $V\le Aut(C_7)$.
For $PSL_2(8)$, this is because the only maximal subgroups not conjugate to $U_1$ or $U_2$ are isomorphic to $D_7$; for $P\Gamma L_2(8)$, the only maximal subgroups not conjugate to $U_1.3$ or $U_2.3$ are $AGL_1(7)$ (all of whose non-cyclic subgroups are of the form $C_7 \rtimes V$ as above) and $PSL_2(8)$ (whose subgroups are already dealt with by the above).

This means that any Galois extension of $\qq$ with group $PSL_2(8)$ or $P\Gamma L_2(8)$ will yield an intersective polynomial with two irreducible factors, just as long as we can make sure that no inertia subgroup $I_p$ has order divisible by $7$
(for any prime $p$). We will restrict to the case of $P\Gamma L_2(8)$ here. 

There are several known polynomials with Galois group $P\Gamma L_2(8)$ over $\qq$ (see e.g. the tables in the appendix of \cite{MM}), and those can indeed be used to find intersective polynomials.
In the following, we use a family of polynomials with nice additional properties to obtain infinitely many intersective polynomials for $P\Gamma L_2(8)$. Since I did not find this specific family in the literature, a strict proof for the Galois group is included below.

\begin{satz}
Let $t$ be transcendental over $\qq$,
$f:=(x^3 + 16x^2 + 160x + 384)^3- 7^3\cdot t(x^2+13x+128)$ and\\
$g:=(x^9 + 11x^8 + 4x^7 - 868x^6 + 6174x^5 - 43974x^4 + 37492x^3 - 28852x^2 - 2967x + 211)^3(x-5)+2^7\cdot 7^3\cdot t(x^3 - x^2 - 9x + 1)^7$.
Then $f\cdot g$ has Galois group $P\Gamma L_2(8)$ over $\qq(t)$, and for every odd integer specialization $t\mapsto t_0\in 2\zz+1$ preserving the Galois group, the specialized polynomial $f\cdot g$ has a root in every $\qq_p$.
\end{satz}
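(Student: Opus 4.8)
The plan is to first identify the Galois group over $\qq(t)$ and then carry out a local analysis over each $\qq_p$ on the specializations. For the Galois group: $f$ (of degree $9$) is a genus-zero polynomial with Galois group $P\Gamma L_2(8)$ over $\qq(t)$ in its primitive action on the $9$ points of $\mathbb{P}^1(\mathbb{F}_8)$ — this can be verified with Magma (or extracted from known $P\Gamma L_2(8)$-realizations), and the point stabilizer $U_1.3$ is core-free, so a root field of $f$ is its fixed field. The partner $g$, of degree $28$ equal to the index of $U_2.3$ (with $U_2$ the normalizer of a Singer cycle, cf.\ Theorem \ref{pgl2cov}), was computed as a minimal polynomial of a primitive element of the fixed field, inside the splitting field of $f$, of a conjugate of $U_2.3$; since $U_2.3$ is also core-free, $g$ and $f$ have the same splitting field, whence $Gal(fg|\qq(t))=P\Gamma L_2(8)$. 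From now on I fix an odd integer $t_0$ such that the specializations $f_0,g_0$ still satisfy $Gal(f_0g_0|\qq)=P\Gamma L_2(8)$; then $f_0$ is irreducible of degree $9$ with Galois group acting as $P\Gamma L_2(8)$ on $\mathbb{P}^1(\mathbb{F}_8)$, and $f_0,g_0$ share a splitting field $K$ with $Gal(K|\qq)=P\Gamma L_2(8)$.

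Next I would reduce to a statement about inertia. By the discussion preceding the theorem, $U_1.3$ and $U_2.3$ form a $2$-covering of $P\Gamma L_2(8)$ with trivial intersection of conjugates, so by Proposition \ref{sonn} it suffices to show that every decomposition group $D_p$ of $K|\qq$ lies in a conjugate of $U_1.3$ or $U_2.3$. Now $D_p$ is solvable (by the basic facts recalled in Section \ref{exist}), and the only solvable subgroups of $P\Gamma L_2(8)$ \emph{not} contained in such a conjugate are the conjugates of $D_7$ and of $AGL_1(7)$; in both of these groups every normal subgroup with cyclic quotient has order divisible by $7$, so, as $I_p\trianglelefteq D_p$ with $D_p/I_p$ cyclic, $D_p$ can fail the requirement only when $7\mid|I_p|$. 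Hence the whole problem becomes: prove $7\nmid|I_p|$ for every prime $p$. An element of order $7$ of $P\Gamma L_2(8)$ lies in $PSL_2(8)$ and acts on $\mathbb{P}^1(\mathbb{F}_8)$ with cycle type $[7,1,1]$, so if $7\mid|I_p|$ then $I_p$ has an orbit of length $7$ on the $9$ roots of $f_0$, and Lemma \ref{orbits} then forces $f_0\equiv(x-a)^7(x^2+bx+c)\pmod p$ for some $a,b,c\in\mathbb{F}_p$ (the factor of multiplicity $\ge 7$ is linear since $\deg f_0=9$). The remaining task is to exclude this congruence for all $p$.

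Write $f=P^3-7^3\,t\,Q$ with $P=x^3+16x^2+160x+384$, $Q=x^2+13x+128$, and record $\operatorname{disc}(P)=-2^{14}\cdot3\cdot7^2$, $\operatorname{disc}(Q)=-7^3$, $\operatorname{Res}(P,Q)=2^7\cdot7^2$. The coefficients of $x^3,\dots,x^9$ of $f$ are exactly those of $P^3$ and carry no $t$, and since the root multiplicities of $P^3$ are multiples of $3$, a multiplicity $\ge 7$ must equal $9$; thus $(x-a)^7\mid f_0\bmod p$ forces $P\equiv(x-a)^3\pmod p$. A short computation — a Gröbner basis of the coefficients of $f-(x-a)^7(x^2+bx+c)$ in $\zz[a,b,c,t]$ as in the analogous arguments, or a ramification count for the separable degree-$9$ map $x\mapsto P(x)^3/(7^3Q(x))$ reduced modulo $p$ (whose ramification over $0$ and $\infty$ is $[3,3,3]$ and $[7,1,1]$) — then shows that the congruence can hold only for $p\in\{2,3,7\}$. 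For $p=2$ and $t_0$ odd, $f_0\equiv x(x^8+x+1)\pmod 2$ is separable, so $I_2=1$. For $p=3$ one has $f_0\equiv x^3(x-1)^6-t_0(x^2+x+2)\pmod 3$, whose unique $\mathbb{F}_3$-rational root (when it exists) is simple, so $f_0\bmod 3$ has no root of multiplicity $\ge 7$. That leaves $p=7$.

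For $p=7$ one finds $P\equiv(x-4)^3\pmod 7$, hence $f_0\equiv(x-4)^9\pmod 7$, and the multiplicity count is now useless. Instead I would substitute $x=4+u$, obtaining
\[
f_0(4+u)=\bigl(u^3+7(4u^2+48u+192)\bigr)^3-7^3\,t_0\,(u^2+21u+196),
\]
and check that its $7$-adic Newton polygon is, for every $t_0$, the single segment from $(0,3)$ to $(9,0)$: the coefficient of $u^j$ has $7$-adic valuation $3,3,3,2,2,2,1,1,1,0$ for $j=0,\dots,9$. By Proposition \ref{newt_prop} every root of $f_0$ has $7$-adic valuation $1/3$, so every irreducible factor of $f_0$ over $\qq_7$ has degree divisible by $3$, hence degree in $\{3,6,9\}$ and therefore ramification index coprime to $7$; each root field of $f_0$ over $\qq_7$ is then tamely ramified, and so is their compositum, the splitting field of $f_0$ over $\qq_7$. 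Thus $7\nmid|I_7|$, completing the proof. The main obstacle is exactly this prime $p=7$: because $f_0\bmod 7$ genuinely has the excluded shape $(x-a)^7(\cdots)$, one cannot argue by root multiplicities and must instead control the $7$-adic factorization via the Newton polygon; everything else — the bookkeeping with $U_1.3,U_2.3$, the reduction to $7\nmid|I_p|$, and the exclusion for $p\ne 7$ — is routine.
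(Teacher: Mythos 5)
Your proof is correct, and most of it runs parallel to the paper's: the same identification of $f$ and $g$ as genus-zero covers with common splitting field, the same reduction (via the solvable-subgroup classification of $P\Gamma L_2(8)$) to showing $7\nmid|I_p|$ for all $p$, the same translation of $7\mid|I_p|$ into the congruence $f_0\equiv(x-a)^7(x^2+bx+c)\pmod p$, the same Gr\"obner-basis elimination of all but finitely many $p$, and the same separability argument at $p=2$. The genuine divergence is at the critical prime $p=7$. The paper works with $g$: the $7$-adic Newton polygon of $g(x+5)$ has an initial length-one segment of slope $\le -1$, so $g$ has exactly one root of valuation $\ge 1$, which must therefore be $\qq_7$-rational, placing $D_7$ directly inside a conjugate of $U_2.3$. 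You instead work with $f$: the Newton polygon of $f_0(x+4)$ is a single segment of slope $-1/3$, so all nine roots have valuation $1/3$, every local factor has degree in $\{3,6,9\}$, each root field (and hence their compositum) is tamely ramified, and $7\nmid|I_7|$ — which is exactly the condition your group-theoretic reduction asks for. Both arguments are sound; the paper's is shorter once one is willing to compute with the degree-$28$ polynomial, while yours stays entirely with the degree-$9$ factor and completes the reduction to ``no inertia of order divisible by $7$'' uniformly, at the cost of the slightly more delicate verification that all interior lattice points lie on or above the segment from $(0,3)$ to $(9,0)$ (which does check out, since the constant term $P(4)^3-7^5\cdot 4t_0$ has exact valuation $3$). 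Two small blemishes, neither fatal: the paper's Gr\"obner basis already contains $2^{11}\cdot 7$, so $p=3$ need not be treated at all; and in your treatment of $p=3$ the parenthetical ``unique rational root is simple'' fails when $3\mid t_0$ (there $f_0\equiv x^3(x-1)^6$), although the conclusion you actually need — no root of multiplicity $\ge 7$ modulo $3$ — still holds in that case.
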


\begin{proof}
We first show $Gal(f\cdot g|\qq(t))=P\Gamma L_2(8)$.
Let $L|\qq(t)$ be a splitting field of $f(t,X)$ and $G:=Gal(L|\qq(t))$. 

Define $f_1,f_2\in \qq[x]$ by $f(t,x)=f_1(x)-tf_2(x)$. Since the rational function $t(x):=f_1(x)/f_2(x)$ has a pole of order $7$ and two simple poles, $G$ contains a $7$-cycle (namely an inertia group generator at $t\mapsto \infty$). Of course $G$ then acts primitively. A well-known theorem usually attributed to Marggraff (e.g.\ \cite[Exercise 7.4.11]{DM}) implies that $G$ is $3$-fold transitive, and therefore one of $PSL_2(8)$, $P\Gamma L_2(8)$, $A_9$ and $S_9$. Since $t\mapsto \infty$ is a rational branch point of $L|\qq(t)$, Fried's branch cycle lemma (e.g.\ \cite[Lemma 2.8]{Voe}) implies that the corresponding inertia group generator has to belong to a rational conjugacy class of $G$ (i.e.\ it has to be conjugate in $G$ to all its non-trivial powers). This implies $G\ne PSL_2(8)$, where the normalizer of a subgroup $C_7$ is only $D_7$.

To conclude $G\notin\{A_9,S_9\}$, we will detect a certain intransitive subgroup of $G$ by constructing some rational function field inside $L$ over which $f$ becomes reducible.\footnote{This approach is common for Galois group verification over $\qq(t)$; see e.g.\ \cite{Ma1} for further examples.}
Let $y$ be a root of $g$ and let $z$ be such that $y=\frac{z^3+z+2}{z^2+6z+37}$.
In particular $\qq(z)|\qq(t)$ is an extension of rational function fields, so $t=\frac{h_1(z)}{h_2(z)}$ with polynomials $h_1,h_2\in \qq[x]$ (which can of course be computed explicitly from the above data, although they are rather large, whence we do not include them here).
Now consider the polynomial $f_1(x)h_2(z)-f_2(x)h_1(z) \in \qq[x,z]$.
Computation with Magma shows that this polynomial is reducible, with factors of degree $3$ and $6$ in $x$. This shows that $f(t,x) = f_1(x)-tf_2(x)$ becomes reducible over $\qq(z)$, which is a degree-$84$ extension of $\qq(t)$ containing the degree-$28$ subextension $\qq(y)|\qq(t)$. But then $f$ is also reducible with the same factorization over $\qq(z)\cap L$. Now $(\qq(z)\cap L)|\qq(t)$ is either of degree strictly smaller than 84 or possesses a non-trivial intermediate field (of degree $28$). This means that $G$ possesses a subgroup $U$ with orbit lengths $3$ and $6$, such that either $[G:U]<84$ or $U$ is not maximal in $G$. Since the $3$-set stabilizers in $A_9$ and in $S_9$ are maximal of index $(9\cdot 8\cdot 7)/6=84$, it follows that $G=P\Gamma L_2(8)$.
Since this group is $3$-fold transitive, all $3$-set stabilizers are of index $(9\cdot 8\cdot 7)/6=84$, which implies $\qq(z)\cap L = \qq(z)$. In particular, $\qq(y)$ is contained in $L$, and so the splitting field of $g$ is $L$ as well. Altogether $Gal(f\cdot g|\qq(t))=P\Gamma L_2(8)$, as claimed.

Now we show the claim about intersective polynomials. Let $p$ be a prime, $t_0\in \zz$, and denote the specialization of $f$, corresponding to $t\mapsto t_0$, by $f_0$.
By the arguments preceding the theorem, we only have to exclude that the inertia group $I_p$ of $f_0$ at $p$ contains a group of order $7$. If it does, then $f_0$ decomposes 
in the form $h_0:=(x-a)^7\cdot (x^2+bx+c)$ over $\mathbb{F}_p$ (with some $a,b,c\in \mathbb{F}_p$), since $I_p$ has an orbit of length 7 in the action on 9 points.
We solve the equation $f_0\equiv h_0$ mod $p$ by computing (with Magma) a Groebner basis of the ideal generated by the coefficients of $f-h$ in $\zz[a,b,c,t]$ (where $h$ is defined like $h_0$, but with the coefficients viewed as indeterminates).
This basis contains an element $2^{11}\cdot 7$. Therefore, if $c_0(a,b,c,t),...,c_8(a,b,c,t) \in \zz[a,b,c,t]$ are the coefficients of $f-h$, there exist polynomials $m_0,...,m_8\in \zz[a,b,c,t]$ with
$\sum_{i=0}^8 m_i c_i(a,b,c,t) = 2^{11}\cdot 7$.
The same equality then holds of course after any integer evaluation of the variables $a,b,c,t$. As $f_0\equiv h_0$ mod $p$ forces all the $c_i$ (evaluated at some integral $a,b,c,t$) to be divisible by $p$,
we obtain $2^{11}\cdot 7 \equiv 0$ mod $p$, and therefore $p\in \{2,7\}$. 

Now for odd $t$, $f$ is separable modulo 2 and therefore unramified at 2.

For $p=7$, consider the Newton polygon corresponding to the polynomial $g(x+5)$ over $\qq_7$. The points $(i,\nu_7(a_i))$ determining this polygon are $(0,k)$ with $k\ge 10$ (exact value depending on $t_0$), $(1,9)$ and $(28,0)$.
All other points lie on or above the line connecting $(1,9)$ and $(28,0)$. By proposition \ref{newt_prop}, this means that $g(x+5)$ has exactly one root of $7$-adic valuation $\ge 1$, and therefore the decomposition groups at 7 fix a root of $g$,
for all specializations $t\mapsto t_0\in \zz$. This concludes the proof.
\end{proof}

{\textbf{Remark:}}\\
The above family of intersective polynomials has the nice additional property that both $f$ and $g$ are of the form $f(t,x)=p_1(x)-tq_1(x)$ and $g(t,x)=p_2(x)-tq_2(x)$, with polynomials $p_1,q_1,p_2,q_2$. 
After replacing $t$ by $2t+1$, the above proof shows that in fact $f\cdot g$ has a root for {\textit{all}} evaluations of $t$ to integer values (although not always with $Gal(f\cdot g)=P\Gamma L_2(8)$).
In other words, there are rational functions $r_1(x)$ and $r_2(x) \in \qq(x)$ (namely $r_i=p_i/q_i$) such that for all primes $p$ and all integer values $t_0\in \zz$, at least one of the rational functions $r_1(x)$ and $r_2(x)$ takes the value $t_0$ in $\qq_p$. Even stronger, let $t_1\in \zz_p$ be a $p$-adic integer. Krasner's lemma yields that, if $t_0\in \zz$ is an integer sufficiently close ($p$-adically) to $t_1$, then $f(t_0,x)$ has a root in $\qq_p$ if and only if $f(t_1,x)$ does.\footnote{To be precise, $f(t_1,x)$ has to be assumed separable here; this however excludes only finitely many values for $t_1$, and in the example of the above theorem, these can be treated separately.}
Therefore, the previous can be generalized to all $p$-adic integer values: For all primes $p$ and all $t_1\in \zz_p$, at least one of the rational functions $r_1$ and $r_2$ takes the value $t_1$ in $\qq_p$.\\
This property is itself worth investigating in generality. Namely, assume that $E|\qq(t)$ is the splitting field of $(r_1(x)-t)\cdot (r_2(x)-t)$ over $\qq(t)$ and $G=Gal(E|\qq(t))$.
Denote by $U_1, U_2\le G$ the Galois groups of $E|K_i$, where $K_i$ is a root field of $r_i-t$ ($i=1,2$).
Let $(\sigma_1,...,\sigma_r)$ be the {\textit{branch cycle description}} of $E|\qq(t)$ (with $\sigma_i\in G$, $i=1,...,r$). Then the following two conditions hold:
\begin{itemize}
\item[i)] $(\sigma_1,...,\sigma_r)$ is a {\textit{genus zero system}} in both permutation actions on $G/U_1$ and on $G/U_2$ (since the fixed fields of $U_i$ are of genus zero).
\item[ii)] The subgroups $U_1$ and $U_2$ yield a $2$-covering of $G$ (because of the intersective property).
\end{itemize}
Since the combination of these two conditions is quite restrictive, a complete classification of such pairs of rational functions might be achieved.

\subsection{The group $M_{11}$}
The smallest Mathieu group $M_{11}$ is $2$-coverable in different ways. In order to give intersective polynomials of small degree, we use a $2$-covering by the two maximal subgroup of smallest index, namely
the point stabilizers in the $3$-transitive actions on 11 and 12 points respectively. One checks directly with a computer that the only solvable subgroups of $M_{11}$ not contained in the union of conjugates of these two subgroups
are metacyclic groups $C_3.C_6$ of order $18$ and several non-metacyclic groups, all of order dividing 144.

The polynomials used in the following theorem were first found by the author in \cite{Koe} (Theorem 1). 
For the sake of simplicity, we give rational, but not integer polynomials $g$ in the theorem below. Turning these into monic integer polynomials is trivial, but would blow up the coefficients unnecessarily.
\begin{satz}
Let $f:=(x^2-4x-16)^5(x^2-4x-1)+2tx^3(x-4)^3(x^3-128)$ and
$g:=x^{11}+x^7(3x+2)s+x(3x^4 + 14/5x^3 + 4/5x^2 - 40/81x - 16/81)s^2-(x-2/5)(x+2/5)s^3$, where $s:=4t/10125$.

Then $fg$ has regular Galois group $M_{11}$ over $\qq(t)$, and the specialized polynomial is intersective for all specializations $t\mapsto t_0\in \zz$ that preserve the Galois group and fulfill $t_0\equiv 1$ mod $3$.
\end{satz}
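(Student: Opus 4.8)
### Proof proposal

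The plan is to follow the same template that worked for the $P\Gamma L_2(8)$ case. First I would invoke the geometric setup: by \cite{Koe}, $f$ and $g$ parameterize three-point branched coverings of $\mathbb{P}^1$ of degrees $11$ and $12$ respectively, sharing a splitting field over $\qq(t)$, with geometric and arithmetic monodromy group both equal to $M_{11}$ acting as a point stabilizer in each of the two $3$-transitive actions. The root field of $f$ is the fixed field of the index-$11$ maximal subgroup $U_1$ (point stabilizer on $11$ points), and the root field of $g$ is the fixed field of the index-$12$ maximal subgroup $U_2$ (point stabilizer on $12$ points). By the discussion preceding the theorem, $U_1$ and $U_2$ give a $2$-covering of $M_{11}$, and moreover the only solvable subgroups not lying in a conjugate of one of them are the metacyclic $C_3.C_6$ of order $18$ and certain non-metacyclic groups of order dividing $144$. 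Since $M_{11}$ is simple, the trivial-intersection condition of Proposition~\ref{sonn}ii) is automatic, and Lemma~\ref{orbits} together with the structure of decomposition groups (every $D_p$ is metacyclic up to the wild inertia, by the Lemma on inertia chains) reduces the problem to controlling the primes where wild ramification, or the exceptional metacyclic group $C_3.C_6$, can occur.

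Next I would argue that for a specialization $t\mapsto t_0$ preserving the Galois group, the decomposition group $D_p$ at a prime $p$ lies in a conjugate of $U_1$ or $U_2$ whenever $p$ is at most tamely ramified and $p\notin\{2,3\}$. Indeed, at a tamely ramified prime away from the ``bad'' primes dividing the order of the exceptional subgroups and the branch locus, $D_p$ is metacyclic of order coprime to $3$ (the tame part of the covering's ramification only involves the orders appearing in the inertia canonical invariant), and hence avoids $C_3.C_6$; every metacyclic subgroup not isomorphic to $C_3.C_6$ is contained in a conjugate of $U_1$ or $U_2$ by the stated subgroup computation. The remaining work is therefore entirely local at $p=2$ and $p=3$, exactly as in the previous subsection. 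At $p=2$: for odd $t_0$ one checks that the reduction of $f$ (or a suitable integral model) modulo $2$ is separable, so $2$ is unramified in the splitting field and $D_2$ is cyclic, hence contained in a conjugate of $U_1$. At $p=3$: here one uses the congruence condition $t_0\equiv 1 \pmod 3$. One possibility is to show, via Lemma~\ref{orbits} applied to the mod-$3$ factorization of $f$ or $g$, together with a Newton-polygon computation (Proposition~\ref{newt_prop}) for $g$ expanded about the relevant root, that the decomposition group at $3$ still fixes a root of $g$ (or of $f$), i.e.\ is contained in a conjugate of $U_2$ (resp.\ $U_1$). Concretely I would compute the Newton polygon of $g(x+x_0)$ over $\qq_3$ for the appropriate root $x_0$, show it forces a unique root of positive valuation, and conclude that $D_3$ fixes that root.

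The main obstacle I expect is the prime $p=3$. Unlike $P\Gamma L_2(8)$, where the only troublesome subgroups had socle $C_7$ and the bad prime was forced to be $2$ or $7$ by a single Gröbner-basis element, here $M_{11}$ has the metacyclic exception $C_3.C_6$ of order $18$, so ramification of order $3$ (or a decomposition group of order $18$) genuinely threatens the construction at $p=3$; this is precisely why the hypothesis $t_0\equiv 1 \pmod 3$ is imposed. I would handle it by the same Gröbner-basis strategy as before: if $D_3$ (or $I_3$) failed to lie in a conjugate of $U_1$ or $U_2$, then by Lemma~\ref{orbits} the mod-$3$ factorization of $f$ (degree $11$) would be forced into one of a small number of shapes corresponding to $C_3.C_6$ having no fixed point on $11$ or $12$ points — in particular an orbit-length partition with no part equal to $1$ on both actions. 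Solving the resulting polynomial-congruence systems over $\mathbb{F}_3$ (the coefficients of $f_0 - h$ for each candidate shape $h$) via Gröbner bases, I expect to find that the ideal is inconsistent under the constraint $t_0\equiv 1\pmod 3$, or else that the Newton polygon argument on $g$ rules out the remaining case. This congruence-and-Gröbner bookkeeping, routine in principle, is where all the real computation sits.
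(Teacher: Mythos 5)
Your overall framework (the $2$-covering by the degree-$11$ and degree-$12$ point stabilizers, the list of exceptional solvable subgroups, reduction to controlling decomposition groups at a few primes) matches the paper, but your reduction to the primes $2$ and $3$ has a genuine gap. You dismiss all other primes on the grounds that tame ramification away from the ``bad'' primes only involves the orders appearing in the inertia canonical invariant of the cover and is therefore coprime to $3$. That premise is false here: the cover defined by $f$ is ramified over $t=\infty$ with cycle type $3^31^3$ (the multiplicities of the zeros of $x^3(x-4)^3(x^3-128)$ together with the pole of order $12-9=3$ at infinity), so inertia of order $3$ --- exactly what allows the uncovered metacyclic group $C_3.C_6$ to arise as a decomposition group --- is built into the family and can a priori occur at any prime at which $t_0$ is close to a branch point. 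The paper handles this by solving $f\equiv(x^3+a_1x^2+a_2x+a_3)^3(x^3+a_4x^2+a_5x+a_6)\bmod p$ via a Gr\"obner basis, which shows ramification index $3$ is possible only for $p\in\{2,3,5,11\}$; the primes $5$ and $11$, which you never mention, then each require their own argument (a simple zero of $f$ at $x=6$ modulo $11$; a simple zero of $f$ modulo $5$ when $5\nmid t_0$, and Newton polygons of $f(x-3)$ resp.\ of $g$ at $5$ depending on $\nu_5(t_0)$).

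A second, smaller error: at $p=2$ you claim the reduction of $f$ is separable for odd $t_0$, so that $2$ is unramified. In fact $f\equiv x^{10}(x-1)^2\bmod 2$. The paper's argument is different: by Lemma \ref{orbits} the decomposition group at $2$ has an orbit of length at most $2$ on $12$ points, whereas the exceptional subgroups have orbit lengths $(12)$, $(8,4)$, $(6,6)$ or $(6,3,3)$, so $D_2$ cannot be one of them. Your treatment of $p=3$ via a Newton polygon for $g$ is essentially what the paper does ($3^{12}g\equiv 2x+2\bmod 3$ for $t_0\equiv 1\bmod 3$, giving a linear factor of $g$ over $\qq_3$), though note that $f$ has degree $12$ and $g$ degree $11$, the reverse of what you state.
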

\begin{proof}
Once again, the polynomial $g$ was computed to parameterize the stem field of an index-11 subgroup in $Gal(f|\qq)\cong M_{11}$.
By the argument preceding this theorem, in order to obtain intersective polynomials from a given $M_{11}$-extension, we only have to exclude wild ramification at $2$ and $3$, and ramification index 3 at any prime.
Since the subgroups of order $3$ in $M_{11}$ have exactly three orbits of length 3, ramification index 3 at a prime $p$ would mean that $f\equiv (x^3+a_1x^2+a_2x+a_3)^3(x^3+a_4x^2+a_5x+a_6)$ mod $p$.
Groebner basis computation as in the previous section 
shows that this is only possible for $p\in \{2,3,5,11\}$. 

Modulo 11, $f$ has a simple zero at $x=6$ for all integer specializations of $t$, which means that the decomposition group at $11$ is contained in a point stabilizer of the degree-12 action.

Modulo 2, $f$ factors as $x^{10}\cdot (x-1)^2$, which means that the decomposition group at $2$ has an orbit of length at most 2 in its action on 12 points. 
However, one checks directly that the orbit lengths of the ``bad" candidate subgroups mentioned
before the theorem are $(12)$, $(8,4)$, $(6,6)$ and $(6,3,3)$ respectively. Therefore the decomposition group at $2$ is not one of them.

To gain information modulo 3, set $t=1$. The polynomial $3^{12}\cdot g$ is then congruent to $2x+2$ mod $3$. Therefore its Newton polygon has a length-1 line segment of slope $0$; in other words, $g$ has a linear factor over $\qq_3$.

Finally, modulo 5, $f$ has at least one simple zero as long as $t$ is not divisible by $5$. Therefore, in these cases, the decomposition groups at $5$ fix a point.\\
If $5|t$, but $5^6$ does not divide $t$, then the Newton polygon for $f(x-3)$ shows that $f$ still has a linear factor over $\qq_5$ 
(the $5$-adic valuations of the coefficients $a_i$ of this polynomial are $\nu_5(a_0)\ge1+\nu_5(t)$, $\nu_5(a_1)=\nu_5(t)$, 
and all other valuations $\nu_5(a_i)$ strictly larger than $\nu_5(t)-(i-1)$. Therefore the Newton polygon contains a length-1 line
segment of slope $-1$). On the other hand, if $5^6|t$, then an analogous argument with the Newton polygon of $g$ shows that $g$ has a linear factor over $\qq_5$.
This concludes the proof.
\end{proof}

\end{document}